\theoremstyle{plain}
\newtheorem{THEOREM}{Theorem}[section]
\newtheorem{theorem}[THEOREM]{Theorem}
\newtheorem{lemma}[THEOREM]{Lemma}
\newtheorem{proposition}[THEOREM]{Proposition}
\theoremstyle{definition}
\newtheorem{definition}[THEOREM]{Definition}
\theoremstyle{remark}
\newtheorem{remark}[THEOREM]{Remark}
\newtheorem{claim}[THEOREM]{Claim}
\newcommand{\thm}[1]{Theorem~\ref{#1}}
\newcommand{\lem}[1]{Lemma~\ref{#1}}
\newcommand{\prop}[1]{Proposition~\ref{#1}}
\def \a {\alpha}
\def \b {\beta}
\def \g {\gamma}
\def \d {\delta}
\def \e {\varepsilon}
\def \f {\varphi}
\def \k {\kappa}
\def \l {\lambda}
\def \n {\nabla}
\def \s {\sigma}
\def \t {\tau}
\def \th {\theta}
\def \Th {\Theta}
\def \O {\Omega}
\def \barPhi { \widebar{\Phi}}
\def \barx {\bar{x}}
\def \barv {\bar{v}}
\def \barth {\bar{\th}}
\def \barTh {\widebar{\Th}}
\def \barX {\widebar{X}}
\def \barV {\widebar{V}}
\def \cA {\mathcal{A}}
\def \cC {\mathcal{C}}
\def \cD {\mathcal{D}}
\def \cF {\mathcal{F}}
\def \cG {\mathcal{G}}
\def \cH {\mathcal{H}}
\def \cK {\mathcal{K}}
\def \cO {\mathcal{O}}
\def \cP {\mathcal{P}}
\def \cQ {\mathcal{Q}}
\def \cR {\mathcal{R}}
\def \cS {\mathcal{S}}
\def \cW {\mathcal{W}}
\newcommand{\N}{\ensuremath{\mathbb{N}}}   
\newcommand{\R}{\ensuremath{\mathbb{R}}}   
\newcommand{\E}{\ensuremath{\mathbb{E}}}
\renewcommand{\S}{\ensuremath{\mathbb{S}}} 
\renewcommand{\geq}{\geqslant}
\renewcommand{\ge}{\geqslant}
\renewcommand{\leq}{\leqslant}
\DeclareMathOperator{\supp}{supp} %
\DeclareMathOperator{\diam}{diam} %
\DeclareMathOperator{\Id}{Id} %
\def \lan {\langle}
\def \ran {\rangle}
\def \p {\partial}
\def \ss {\subset}
\def \HI {H\"older inequality}
\renewcommand{\geq}{\geqslant}
\renewcommand{\ge}{\geqslant}
\renewcommand{\leq}{\leqslant}
\def \dx  {\, \mbox{d}x}
\def \dX  {\, \mbox{d}X}
\def \dV  {\, \mbox{d}V}
\def \dy  {\, \mbox{d}y}
\def \dz  {\, \mbox{d}z}
\def \dr  {\, \mbox{d}r}
\def \ds  {\, \mbox{d}s}
\def \dw  {\, \mbox{d}w}
\def \dth  {\, \mbox{d}\th}
\def \dTh  {\, \mbox{d}\Th}
\def \de  {\, \mbox{d}\eta}
\def \dv  {\, \mbox{d} v}
\def \dbarV  {\, \mbox{d} \barV}
\def \dbarX  {\, \mbox{d} \barX}
\def \dbarTh  {\, \mbox{d} \barTh}
\def \ddt  {\frac{\mbox{d\,\,}}{\mbox{d}t}}
\def \Lip {\mathrm{Lip}}
\def \tv {\tilde{v}}
\def \tw {\tilde{w}}
\def \tu {\tilde{u}}
\def \ip {{\it\Pi}}
\def\vp {v^{\ip}}
\def\wp {w^{\ip}}
\def\up {u^{\ip}}
\begin{document}

\title{Propagation of Chaos for the Cucker-Smale Systems under heavy tail communication}

\author{Vinh Nguyen}

\author{Roman Shvydkoy}

\address{Department of Mathematics, Statistics and Computer Science, University of Illinois at Chicago, 60607}

\email{vnguye66@uic.edu }

\email{shvydkoy@uic.edu}

\subjclass{92D25, 35Q35}

\date{\today}

\keywords{Cucker-Smale, collective behavior, propagation of chaos, mean-field limit}

\thanks{\textbf{Acknowledgment.}  
	The work of RS was supported in part by NSF
	grants DMS-1813351 and DMS-2107956.}

\begin{abstract}
In this work we study propagation of chaos for solutions of the Liouville equation for the classical discrete Cucker-Smale system. Assuming that the communication kernel satisfies the heavy tail condition -- known to be necessary to induce exponential alignment -- we obtain a linear in time convergence rate of the $k$-th marginals  $f^{(k)}$ to the product of $k$ solutions of the corresponding Vlasov-Alignment equation, $f^{\otimes k}$. Specifically, the following estimate holds in terms of Wasserstein-2 metric
\begin{equation}\label{e:abst}
 \cW_2(f^{(k)}_t,f^{\otimes k}_t) \leq C k^{1/2}  \min\left\{ 1, \frac{t}{\sqrt{N}} \right\}.
\end{equation}
For systems with the Rayleigh-type friction and self-propulsion force, we obtain a similar result for sectorial solutions. Such solutions are known to align exponentially fast via the method of Grassmannian reduction, \cite{LRS-friction}. We recast the method in the kinetic settings and show that the bound \eqref{e:abst} persists but with the quadratic dependence on time. 

In both the forceless and forced cases, the result represents an improvement over the exponential bounds established earlier in the work of Natalini and Paul, \cite{NP2021-orig}, although those bounds hold for general kernels. The main message of our work is that flocking dynamics improves the rate considerably.

\end{abstract}

\maketitle

\section{Background and main results}

 One of the fundamental questions of the mathematical theory of large systems of particles is a derivation and formal justification of the corresponding kinetic models. Among the many systems describing collective phenomena this question has been successfully settled for the Cucker-Smale model describing the basic  mechanism of alignment \cite{CS2007a,CS2007b}:
\begin{equation}\label{e:CS}
\left\{\begin{split}
\dot{x}_i & = v_i, \qquad x_i(0) = x_i^0 \in \R^n \\
\dot{v}_i & =  \frac{1}{N} \sum_{j=1}^N \phi(x_i - x_j)(v_j - v_i), \quad  v_i(0) = v_i^0 \in \R^n.
\end{split}\right.
\end{equation}
Here $\phi$ is a non-negative non-increasing smooth communication kernel.  The corresponding Vlasov-Alignment equation is given by
\begin{equation}\label{e:VE}
\p_t f + v\cdot \n_x f + \n_v \cdot (f F(f)) = 0, \quad f(0) = f_0: \R^{2n} \to \R_+,
\end{equation}
where 
\[
F(f)(x,v) = \int_{\R^{2n}} \phi(x-y) (w-v) f(y,w,t) \dy \dw.
\]
A formal derivation of \eqref{e:VE} via the BBJKY hierarchy was performed   in Ha and Tadmor \cite{HT2008}, and rigorously via the mean-field limit in Ha and Liu \cite{HL2009}.  

The hierarchy approach is based upon the classical idea of  propagation of chaos, which postulates that the particles $(x_1,v_1,\ldots,x_N,v_N)$ whose joint probability distribution $f^N$ is given by the solution to the Liouville transport equation 
\begin{equation}\label{e:LE}
\p_t f^{N} + \sum_{i=1}^N v_i \cdot \n_{x_i} f^N + \sum_{i=1}^N \n_{v_i} \cdot(f^N F_i^N) = 0,
\end{equation}
would gradually decorrelate as $N\to \infty$ if initially so
\begin{equation}\label{e:LEic}
f^N(0) = f_0^{\otimes N}, \qquad f_0 : \R^{2n} \to \R_+,
\end{equation}
and their individual distributions would evolve according to \eqref{e:VE}.
In other words, 
\begin{equation}\label{e:prop}
\lan f^{N}, \f_1\otimes \ldots \otimes \f_k \otimes 1 \otimes \dots \otimes 1 \ran \to \prod_{j=1}^k \lan f, \f_j \ran, \qquad \f\in C_b(\R^{2nk}).
\end{equation}

The mean-field limit on the other hand, is based upon weak convergence of a sequence of empirical measures built from solutions to \eqref{e:CS},
\[
\mu^N = \frac{1}{N} \sum_{j=1}^N \d_{x_i(t)} \otimes \d_{v_i(t)} \to f.
\]
In fact, a more detailed analysis done in \cite{Ha-stability,Sbook} establishes Lipschitz continuity of measure-valued solutions to  \eqref{e:VE} with respect to the Wasserstein metric,
\[
\cW_p(\mu'_t, \mu''_t) \leq C(t) \cW_p(\mu'_0,\mu''_0).
\]

It is well-known, however, that propagation of chaos and the mean-field limit (in a somewhat more specific sense) are equivalent, see Sznitman \cite{Sznitman}. In fact, \eqref{e:prop} holds if and only if for any $\f \in \Lip(\R^{2n})$ one has
\begin{equation}\label{e:Elim}
E_\f(t) = \int_{\R^{2nN}} \left| \frac1N \sum_{j=1}^N \f(x_i(t), v_i(t)) - \lan f_t, \f \ran \right|^2 f_0^{\otimes N} \dX_0 \dV_0 \to 0,
\end{equation}
where $X_0,V_0$ are the initial conditions for the characteristic flow $\{ x_i(t), v_i(t) \}_{i=1}^N$. Note that initially $E_\f(0) \to 0$ by a direct verification.  Technically, since not every initial ensemble $X_0,V_0$ in the support of $f_0^{\otimes N}$ forms an empirical measure weakly close to $f_0$, the limit \eqref{e:Elim} does not directly follow from \cite{HL2009,Ha-stability,Sbook}. However, one can restore it using similar estimates on the deformation of the flow-map of \eqref{e:CS} and coupling with the characteristics of \eqref{e:VE}.

In any case, Snitzman's general principle seems to provide little quantitative information on the rate of propagation in \eqref{e:prop} as it avoids using any specificity of the system at hand. 
For stochastically forced systems, the work of Bolley, Ca\~nizo and Carrillo  \cite{BCC2011} establishes such a quantitative estimate on the Wasserstein-2 distance:
\begin{equation}\label{e:W2stoch}
\cW_2(f^{(k)}_t,f^{\otimes k}_t) \leq C \sqrt{\frac{k}{N^{e^{-Ct}}}}.
\end{equation}
Recently, Natalini and Paul addressed the deterministic case in \cite{NP2021-orig} and with additional chemotaxis forces in \cite{NPchemo}. For the forceless system, the estimate 
carries exponential dependence in time,
\begin{equation}\label{e:W2det}
\cW_2(f^{(k)}_t,f^{\otimes k}_t) \leq C e^{\d t} \sqrt{\frac{k}{N}}.
\end{equation}

The estimates \eqref{e:W2stoch}, \eqref{e:W2det} are finite-time bounds in spirit, in the sense that they do not take into account any flocking long-time behavior of the system. In this present work we raise the question: can one improve upon the time dependence in the deterministic case \eqref{e:W2det} when the system is known to flock exponentially fast?  It is the result that goes back to Cucker and Smale \cite{CS2007a} and improved and extended in  \cite{HL2009, HT2008, CFRT2010} that the system \eqref{e:CS} with a heavy tail radial communication,
\begin{equation}\label{e:ht}
\int_0^\infty \phi(r) \dr = \infty
\end{equation}
aligns with an exponential rate. Let us give a quantitative summary of this result for future reference, see also \cite{Sbook} for details.

\begin{proposition}\label{p:flock} Suppose $\phi$ satisfies \eqref{e:ht}. For any solution to \eqref{e:CS} with initial data in $(X_0,V_0)$ in a compact domain $\O \ss \R^{2nN}$ the following flocking estimates hold:
\begin{equation}\label{e:flockCS}
\sup_{t>0} \max_{i,j=1,\ldots,N} |x_i - x_j| ={D} < \infty, \quad \max_{i,j=1,\ldots,N} |v_i - v_j| \leq A_0 e^{-t \phi({D})},
\end{equation}
where $A_0$ is the initial velocity fluctuation and ${D}$ depends only on the initial diameter of the flock and $\phi$. 

Similarly, for any solution $f$ to \eqref{e:VE} with initial compact support one has
 \begin{equation}\label{e:flockkinCS}
\sup_{t>0} \diam \supp f_t ={D} < \infty, \quad \max_{(x',v'),(x'',v'')\in \supp f_t} |v' - v''| \leq A_0 e^{-t \phi({D})}.
\end{equation}
\end{proposition}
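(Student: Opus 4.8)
The plan is to control the two fluctuation functionals
\[
d(t) = \max_{i,j}|x_i(t) - x_j(t)|, \qquad a(t) = \max_{i,j}|v_i(t) - v_j(t)|,
\]
and to show they satisfy the closed system of differential inequalities $\dot d \le a$ and $\dot a \le -\phi(d)\, a$, after which the heavy-tail condition \eqref{e:ht} forces the uniform diameter bound and the exponential decay of $a$ recorded in \eqref{e:flockCS}. Both $d$ and $a$ are maxima of finitely many smooth functions, hence locally Lipschitz, so by \RL\ they are differentiable almost everywhere and it suffices to bound their a.e. derivatives. The inequality $\dot d \le a$ is immediate, since $\frac{d}{dt}|x_i - x_j| \le |v_i - v_j| \le a$ for the pair realizing the maximum of $d$.

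The alignment inequality is the crux. Fixing a time $t$ and a pair $(i,j)$ that realizes $a(t) = |v_i - v_j|$, I would compute $\frac12\frac{d}{dt}|v_i - v_j|^2 = (v_i - v_j)\cdot(\dot v_i - \dot v_j)$ and substitute the dynamics \eqref{e:CS}. Because $(i,j)$ is extremal, projecting every $v_k$ onto the direction of $v_i - v_j$ yields $(v_i - v_j)\cdot(v_k - v_i) \le 0$ and $(v_i - v_j)\cdot(v_k - v_j) \ge 0$. Combining these sign conditions with the lower bound $\phi(x_i - x_k),\, \phi(x_j - x_k) \ge \phi(d)$ — valid since $\phi$ is non-increasing and $|x_i - x_k|,\, |x_j - x_k| \le d$ — and telescoping the two communication sums, I expect to arrive at $\frac12\frac{d}{dt}|v_i - v_j|^2 \le -\phi(d)|v_i - v_j|^2$, that is, $\dot a \le -\phi(d)\, a$.

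With both inequalities in hand, I would introduce the Lyapunov functional $\cE(t) = a(t) + \int_0^{d(t)}\phi(r)\dr$. Using $\phi \ge 0$ together with $\dot d \le a$ gives $\dot\cE = \dot a + \phi(d)\dot d \le -\phi(d)a + \phi(d)a = 0$, so $\cE$ is non-increasing and $\int_0^{d(t)}\phi(r)\dr \le \cE(0) = a(0) + \int_0^{d(0)}\phi(r)\dr$ for all $t$. Since \eqref{e:ht} makes $r \mapsto \int_0^r \phi$ diverge, the left-hand side staying bounded confines $d(t)$ below the finite threshold $D$ determined by $\int_{d(0)}^{D}\phi(r)\dr = a(0)$, which is the uniform bound in \eqref{e:flockCS}. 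Once $d(t) \le D$, monotonicity gives $\phi(d(t)) \ge \phi(D) > 0$, so $\dot a \le -\phi(D)\, a$ and \GL\ delivers $a(t) \le A_0 e^{-\phi(D)t}$ with $A_0 = a(0)$.

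For the kinetic statement \eqref{e:flockkinCS}, the measure $f_t$ is transported along the characteristics $\dot X = V$, $\dot V = F(f)(X,V)$, so $\supp f_t$ is the flow image of $\supp f_0$ and I would track the spatial diameter $D_X(t) = \diam(\supp f_t)$ and the velocity spread $D_V(t)$ over the support. Since $F(f)$ averages $\phi(X - y)(w - V)$ against the probability measure $f_t$ — a genuine convex combination in which every active $y$ obeys $|X - y| \le D_X$ — the same extremality-and-telescoping computation produces $\dot D_V \le -\phi(D_X)\, D_V$ and $\dot D_X \le D_V$, and the Lyapunov argument above reproduces \eqref{e:flockkinCS} verbatim. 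I expect the main obstacle to be the rigorous derivation of $\dot a \le -\phi(d)a$: it rests on the extremal geometry of the maximizing pair and on handling the a.e. differentiability of the max functional through \RL, with the two sums needing to telescope exactly to $-\phi(d)|v_i - v_j|^2$ after the sign estimates. In the kinetic case the analogous step additionally requires that the maximum over the support be attained and vary continuously along characteristics, which is precisely where the compact-support hypothesis on $f_0$ is used.
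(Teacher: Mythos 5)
Your proof is correct, and it is precisely the standard argument this result rests on: the paper itself offers no proof of Proposition~\ref{p:flock}, deferring to \cite{HL2009,CFRT2010,Sbook}, and those references establish it exactly via your system of differential inequalities $\dot d \le a$, $\dot a \le -\phi(d)a$ (obtained through the extremal-pair projection and Rademacher's Lemma) together with the Lyapunov functional $a + \int_0^d \phi(r)\,\dr$ and the heavy-tail condition. The kinetic case is likewise handled there by the same computation along characteristics over the compact support, so your proposal matches the intended proof in both parts.
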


With the use of this additional flocking information we will improve the estimate \eqref{e:W2det} to being linear in time.

\begin{theorem}\label{t:main} Suppose $\phi$ satisfies \eqref{e:ht}, and let 
 $f_0\in C^1_0(\R^{2n})$ be an initial distribution with a compact support. Let $f^N$ be the solution to \eqref{e:LE}-\eqref{e:LEic}, while $f$ be the solution to \eqref{e:VE}. Then there exists a constant $C$ which depends only on $\diam(\supp f_0)$ and $\phi$ such that for all $N\in \N$, $k\leq N$, and $t\geq 0$ one has
\begin{equation}\label{e:chaos}
 \cW_2(f^{(k)}_t,f^{\otimes k}_t) \leq C \sqrt{k}  \min\left\{ 1, \frac{t}{\sqrt{N}} \right\}.
\end{equation}
\end{theorem}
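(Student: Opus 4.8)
The plan is to run a \emph{synchronous coupling} between the $N$-body flow and $N$ independent copies of the mean-field flow, and then to use \prop{p:flock} to show that the coupling error is fed by a source that decays exponentially in time -- precisely the feature that is absent from the general-kernel analysis behind \eqref{e:W2det}. First I would sample i.i.d. data $(x_i^0,v_i^0)\sim f_0$, evolve $(x_i,v_i)$ by \eqref{e:CS}, and evolve a second family $(\bar x_i,\bar v_i)$ along the Vlasov characteristics $\dot{\bar x}_i=\bar v_i$, $\dot{\bar v}_i=F(f)(\bar x_i,\bar v_i)$ from the \emph{same} initial data. Since $f$ solves \eqref{e:VE}, its characteristic flow transports $f_0$ to $f_t$, so the $(\bar x_i,\bar v_i)$ are i.i.d. with law $f_t$; hence $(\bar x_1,\bar v_1,\dots,\bar x_k,\bar v_k)\sim f^{\otimes k}_t$ while $(x_1,v_1,\dots,x_k,v_k)\sim f^{(k)}_t$. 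Writing $p_i=x_i-\bar x_i$, $q_i=v_i-\bar v_i$, this coupling together with exchangeability gives
\[
\cW_2^2(f^{(k)}_t,f^{\otimes k}_t)\leq\sum_{i=1}^k\E\big(|p_i|^2+|q_i|^2\big)=k\big(\cX(t)+\cV(t)\big),\qquad \cX=\E\tfrac1N\sum_i|p_i|^2,\ \ \cV=\E\tfrac1N\sum_i|q_i|^2,
\]
so it remains to bound $\cX+\cV$ by $Ct^2/N$.

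From $\dot p_i=q_i$ one immediately gets $\frac{d}{dt}\sqrt\cX\leq\sqrt\cV$. Differentiating $\cV$, the force difference $\dot q_i$ splits into the alignment term $\frac1N\sum_j\phi(x_i-x_j)(q_j-q_i)$, the kernel term $\frac1N\sum_j[\phi(x_i-x_j)-\phi(\bar x_i-\bar x_j)](\bar v_j-\bar v_i)$, and the fluctuation term $\frac1N\sum_j\phi(\bar x_i-\bar x_j)(\bar v_j-\bar v_i)-F(f)(\bar x_i,\bar v_i)$. Setting $\lambda=\phi(D)$, \prop{p:flock} enters in three ways. Testing the alignment term against $q_i$ and symmetrizing in $i\leftrightarrow j$ produces $-\frac{1}{2N^2}\sum_{i,j}\phi(x_i-x_j)|q_i-q_j|^2\leq0$, so it is dissipative and may be dropped; this cancellation is what prevents the velocity error from self-amplifying. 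The velocity factor $|\bar v_j-\bar v_i|\leq A_0 e^{-\lambda t}$ from \eqref{e:flockkinCS}, together with the Lipschitz bound on $\phi$ over the bounded flock, controls the kernel term by $Ce^{-\lambda t}\big(|p_i|+\frac1N\sum_j|p_j|\big)$. Finally, conditioned on particle $i$, the fluctuation summands are i.i.d. with mean $F(f)(\bar x_i,\bar v_i)$ and, by the same exponential bound on velocity differences, variance $\leq Ce^{-2\lambda t}$, so $\E|\mathrm{fluctuation}_i|^2\leq Ce^{-2\lambda t}/N$.

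Inserting these into $\frac{d}{dt}\frac12\cV=\E\frac1N\sum_i q_i\cdot\dot q_i$ and using Cauchy--Schwarz yields the coupled system
\[
\frac{d}{dt}\sqrt\cV\leq Ce^{-\lambda t}\sqrt\cX+\frac{C}{\sqrt N}e^{-\lambda t},\qquad \frac{d}{dt}\sqrt\cX\leq\sqrt\cV,\qquad \cX(0)=\cV(0)=0.
\]
The decisive point is that $e^{-\lambda t}$ is \emph{integrable}: substituting $\sqrt\cX(t)\leq\int_0^t\sqrt\cV$ into the first inequality and applying Grönwall with the integrable kernel $\frac C\lambda e^{-\lambda t}$ gives $\sqrt\cV\leq C/\sqrt N$ uniformly in $t$, hence $\sqrt\cX\leq Ct/\sqrt N$; keeping track of small times one finds $\sqrt\cV\leq C\min\{t,1\}/\sqrt N$, so that $\cX+\cV\leq Ct^2/N$ for all $t$. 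This proves $\cW_2(f^{(k)}_t,f^{\otimes k}_t)\leq C\sqrt k\,t/\sqrt N$, and the competing bound $\cW_2\leq C\sqrt k$, which holds because the flock stays in a set of bounded diameter by \prop{p:flock}, supplies the other half of the minimum in \eqref{e:chaos}.

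The main obstacle is the closing step. The naive loop $\cV'\lesssim\cV$ reproduces the exponential factor $e^{\delta t}$ of \eqref{e:W2det}, so the entire improvement rests on extracting the decay $e^{-\lambda t}$ from \emph{both} the kernel coupling and the fluctuation source, and on the favorable sign of the alignment term. I expect the most delicate points to be the rigorous justification of the conditional i.i.d. structure and the exponentially decaying variance of the fluctuation term, and the verification that the auxiliary family is genuinely $f_t$-distributed so that its $k$-marginal is exactly $f^{\otimes k}_t$.
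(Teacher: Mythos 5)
Your proposal is correct and follows essentially the same route as the paper, merely phrased probabilistically: your synchronous coupling is exactly the paper's comparison of the Liouville flow $\Phi_t^N$ with the product of Vlasov characteristics $\barPhi_t^{\otimes N}$ on the probability space $(\R^{2nN}, f_0^{\otimes N}\dX_0\dV_0)$; your exchangeability reduction to $k(\cX+\cV)$ is the paper's combinatorial averaging over $\Sigma_N^k$ leading to \eqref{e:wassaux}; and your alignment/kernel/fluctuation splitting, with the dissipative sign after symmetrization, the Lipschitz-times-$e^{-\d t}$ bound from \prop{p:flock}, and the variance bound $Ce^{-2\d t}/N$, is precisely the paper's $B$, $A$, $C$ (the paper cites \cite[Lemma 3.3]{NP2021-orig} for the variance estimate you derive by conditioning). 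The only genuine divergence is how the ODI system is closed. The paper forms the weighted energy $\e x^2+y^2$ with $x=1+\cP^{1/2}$, $y=\cK^{1/2}$, chooses $\e=\d^2$ to conclude $x\lesssim e^{\d t/2}$, and then bootstraps; you instead substitute $\sqrt{\cX}(t)\le\int_0^t\sqrt{\cV}$ and invoke Gr\"onwall. Your version does close, but only after an explicit Fubini step: integrating your differential inequality and exchanging the order of integration gives $\sqrt{\cV}(t)\le \frac{C}{\l\sqrt N}+\frac{C}{\l}\int_0^t e^{-\l\tau}\sqrt{\cV}(\tau)\,d\tau$, and since the kernel $\frac{C}{\l} e^{-\l\tau}$ has finite total mass, the integral form of Gr\"onwall yields the uniform bound $\sqrt{\cV}\le\frac{C}{\l\sqrt N}e^{C/\l^2}$, hence $\sqrt{\cX}\le Ct/\sqrt N$; this is, if anything, more direct than the paper's bootstrap, so you should spell out that exchange of integrals rather than leave it implicit.

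One caveat applies equally to your write-up and to the paper's: the competing bound $\cW_2\le C\sqrt k$ is justified in both by the uniform diameter bounds of \prop{p:flock}, but those bounds control each flock separately, not the distance \emph{between} the particle flock and the kinetic flock. The two travel with asymptotic mean velocities $\frac1N\sum_j v_j^0$ and $\int v\, f_0\,\dx\dv$, which differ by $O(N^{-1/2})$ for typical initial draws, so the coupled positions (and hence $\cP$) grow in time. Since your handling of this point coincides with the paper's (which asserts $\cP\le CN$ directly from the support bounds), it is not a gap relative to the paper's own proof, but it is the one step of the argument where neither write-up is complete; the linear-in-time half of \eqref{e:chaos}, which is the paper's main claim, is unaffected.
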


Our general methodology relies on the same classical coupling method, which compares characteristic flow of the original system \eqref{e:CS} to $N$ copies of the flow-map of the kinetic transport  \eqref{e:VE}, but it  differs from  \cite{NP2021-orig} in two aspects. First, we run the entire argument from the Lagrangian point of view, which gives a direct access to characteristics and the flocking estimates. This is closer in spirit to the original mean-field approach of \cite{HL2009} or  \cite{BCC2011} in stochastic settings. Second, we rely on the flocking information of  \prop{p:flock} to extract a crucial stabilizing exponential factor in the estimation of kinetic energy, see \eqref{e:Kfinal}. The linear time dependence here comes primarily from growth of the potential energy, and it seems not to be removable within the given framework.

Next, we consider the same problem in the context of systems forced with self-propulsion and Rayleigh-type friction force with variable characteristic parameters $\th$:
\begin{equation}\label{e:CSR}
\begin{cases}
	\dot{x}_i= v_i,\\
	\dot{v}_i = \dfrac{1}{N} \sum\limits_{j=1}^{N}\phi(x_i-x_j)(v_j - v_i)+ \s v_i(\th_i-|v_i|^p), \\
	\dot{\th}_i = \dfrac{\k}{N} \sum\limits_{j=1}^{N}\phi(x_i-x_j)(\th_j - \th_i),
\end{cases} \qquad(x_i,v_i,\th_i) \in \R^n \times \R^n \times \R_+,
\end{equation}
where $\k >0$ is a coupling coefficient and $p>0$. This model is relevant in the study of systems of agents with a tendency to adhere to their preferred characteristic speeds $\th_i$, see \cite{Ha-friction,LRS-friction}.  The recent study \cite{LRS-friction} introduced a general method of Grassmannian reduction that allows to prove flocking for solutions with velocities confined to a sector $\Sigma$ of opening $<\pi$, so-called sectorial solutions, see \prop{p:flockR} below. We give an extension of this method to the corresponding kinetic Vlasov equation in \prop{p:flockRkin} and use it to prove propagation of chaos for the forced system \eqref{e:CSR}. Specifically, we show that is the kernel $\phi$ satisfies a power low bound with heavy tail, see \eqref{e:kernelpower} below,  then for any sectorial solutions satisfy
\begin{equation}\label{e:chaosRF}
 \cW_2(f^{(k)}_t,f^{\otimes k}_t) \leq C \sqrt{k}  \min\left\{ 1, \frac{t^2}{\sqrt{N}} \right\}.
\end{equation}
To achieve this bound we employ monotonicity of the force to control the ``bad" self-propulsion component.  The ultimate effect of its presence, however, is reflected in the quadratic dependence on time in \eqref{e:chaosRF}.

 In the case $\k =0$ our analysis gives no additional improvement over \eqref{e:W2det}. However, the derived kinetic equation can interpreted as a model of opinion dynamics of a large population which takes into account fixed conviction values $\th$. See Remark~\ref{r:k=0} for more discussion.

\section{Propagation of chaos for the forceless system}\label{sec:classicalCS}

In this section we focus on establishing propagation of chaos for the pure Cucker-Smale system \eqref{e:CS}.  So, to fix the notation let us consider a solution $f^N$ to the full Liouville equation \eqref{e:LE} with the product initial condition \eqref{e:LEic} on the configuration space $(X,V) \in \R^{2nN}$. We can assume without loss of generality that $f_0$ is a probability distribution. The forces $F_i^N$'s are given by the Cucker-Smale system
\[
F_i^N(X,V) = \frac1N \sum_{j=1}^N \phi(x_i - x_j)(v_j - v_i).
\]
Due to symmetries of the forces, the solution will remain symmetric with respect to permutations of pairs $(x_i,v_i)$ for all time.

We define the $k$-th marginal as usual by
\begin{equation}\label{def:1st_mgl}
f^{(k)}_t(x_1,v_1,\ldots,x_k,v_k) = \int_{\R^{2n(N-k)}} f^N_t(x_1,v_1,\ldots,x_N,v_N) \dx_{k+1}\ldots \dv_N.
\end{equation}

Let us introduce various characteristic maps that will be used in the proof. We denote by 
\[
\Phi^N_t = (x_1(t),v_1(t),\ldots,x_N(t),v_N(t)): \R^{2n N} \to \R^{2n N}
\]
the flow-map of the Liouville equation \eqref{e:LE}, in other words these are solutions to the agent-based system \eqref{e:CS}.  Then, $f^N_t$ at any time $t>0$ is a push-forward of the initial distribution by $\Phi^N_t$,
\begin{equation}\label{e:push}
f^N_t = \Phi_t^N \sharp f_0^{\otimes N}.
\end{equation}

Now, denote by 
\[
\barPhi_t = (\barx(t),\barv(t)): \R^{2n} \to \R^{2n}
\]
the flow-map of the Vlasov equation \eqref{e:VE}, i.e.
\begin{equation}\label{e:kinCS}
\left\{\begin{split}
\dot{\barx} & = \barv, \qquad \barx(0) = \barx_0 \\
\dot{\barv} & =  \int_{\R^{2n}} \phi(\barx-y) (w-\barv) f(y,w,t) \dy \dw, \quad \barv(0) = \barv_0,
\end{split}\right.
\end{equation}
and by 
\[
\barPhi^{\otimes N}_t = (\barx_1(t),\barv_1(t),\ldots,\barx_N(t), \barv_N(t)): \R^{2nN} \to \R^{2nN}
\]
the direct product of $N$ copies of $\barPhi_t$'s.  Thus,
\begin{equation}\label{e:pushbar}
f_t =  \barPhi_t \sharp f_0, \qquad  f^{\otimes N}_t =  \barPhi_t^{\otimes N} \sharp f_0^{\otimes N}.
\end{equation}

The proof of \thm{t:main} can be reduced to establishing the following estimate
\begin{equation}\label{e:wassaux}
 \int_{R^{2nN}}  | \Phi_t^N(X_0,V_0) - \barPhi_t^{\otimes N}(X_0,V_0)|^2 \,f_0^{\otimes N}(X_0,V_0) \dX_0 \dV_0 \leq  C \min\{ N, t^2\}.
\end{equation}
Indeed, let us recall that the Wasserstein-2 distance between two probability measures $\mu',\mu''$  on $\R^{2nk}$ can be defined in probabilistic sense as 
\[
\cW^2_2(\mu,\bar{\mu}) = \inf \E [ |Z - \widebar{Z} |^2 ],
\]
where the infimum is taken over $\R^{2nk}$-valued random variables $Z,\widebar{Z}$ defined on any probability space with distributions given by $\mu$ and $\bar{\mu}$, respectively. To measure the distance between $f^{(k)}_t$ and $f^{\otimes k}_t$ we can pick the probability space $\R^{2nN}$ with measure $f_0^{\otimes N}(X_0,V_0) \dX_0 \dV_0$, and random variables given by any selection of $k$ coordinates of $\Phi_t^N$ and $\barPhi_t^{\otimes N}$, respectively, because their probability distributions relative to the chosen base space are exactly $f^{(k)}_t$ and $f^{\otimes k}_t$ according to \eqref{e:push} and \eqref{e:pushbar}. 

So, let us denote by $\Sigma_N^k$ is the set of all ordered subsets of $[1,\ldots,N]$ of size $k$. Clearly, its cardinality is ${N\choose{k}}$.  Then, for any $\s \in \Sigma_N^k$,
\[
 \cW^2_2(f^{(k)}_t,f^{\otimes k}_t)  \leq  \int_{R^{2nN}} \sum_{i=1}^k | (x_{\s(i)},v_{\s(i)})-(\barx_{\s(i)},\barv_{\s(i)})|^2 \,f_0^{\otimes N}(X_0,V_0) \dX_0 \dV_0
\]
Summing up over all $\s \in \Sigma_N^k$, we obtain
\[
{N\choose{k}} \cW^2_2(f^{(k)}_t,f^{\otimes k}_t)  \leq   \int_{R^{2nN}} \sum_{\s \in \Sigma_N^k}  \sum_{i=1}^k | (x_{\s(i)},v_{\s(i)})-(\barx_{\s(i)},\barv_{\s(i)})|^2 \,f_0^{\otimes N}(X_0,V_0) \dX_0 \dV_0.
\]
Observe that in the double sum inside the integral each coordinate will be repeated ${N-1\choose{k-1}}$ times. So, 
\[
{N\choose{k}} \cW^2_2(f^{(k)}_t,f^{\otimes k}_t)  \leq  {N-1\choose{k-1}} \int_{R^{2nN}}  \sum_{i=1}^N | (x_{i},v_{i})-(\barx_{i},\barv_{i})|^2 \,f_0^{\otimes N}(X_0,V_0) \dX_0 \dV_0.
\]
Simplifying and using \eqref{e:wassaux}, we obtain
\[
\cW^2_2(f^{(k)}_t,f^{\otimes k}_t)  \leq C k  \min\left\{ 1, \frac{t^2}{N} \right\},
\]
as desired.

To establish  \eqref{e:wassaux} let us break the expression under the integral into potential and kinetic part,
\begin{equation}\label{}
\cP = \frac12 \int_{R^{2nN}}  | X_t - \barX_t |^2 \, f_0^{\otimes N} \dX_0 \dV_0, \qquad \cK = \frac12 \int_{R^{2nN}}  | V_t - \barV_t  |^2\, f_0^{\otimes N}\dX_0 \dV_0.
\end{equation}
Here, $X_t,V_t$ and $\barX_t, \barV_t$ denote the corresponding components of $\Phi_t^N$ and $\barPhi_t^{\otimes N}$, respectively.
Clearly,
\begin{equation}\label{e:PK}
\ddt \cP \leq 2 \cP^{1/2} \cK^{1/2}.
\end{equation}

Let us now write out the equation for the kinetic part,
\begin{equation*}\label{e:kin_part}
\begin{split}
\ddt \cK & =  \int_{R^{2nN}}\sum_{i=1}^N (v_i - \barv_i) \cdot \left( \frac1N \sum_{j=1}^N \phi(x_i - x_j)(v_j - v_i) - \int_{R^{2n}}\phi(\barx_i-y) (w-\barv_i) f(y,w,t) \dy \dw \right) \\
& \hspace{3in} \times f_0^{\otimes N} \dX_0 \dV_0 \\
& = A+B+C,
\end{split}
\end{equation*}
where 
\[
\begin{split}
A& = \int_{R^{2nN}}\sum_{i=1}^N (v_i - \barv_i) \cdot \frac1N \sum_{j=1}^N [\phi(x_i - x_j)- \phi(\barx_i - \barx_j)](v_k - v_i) \, f_0^{\otimes N} \dX_0 \dV_0\\
B & = \int_{R^{2nN}} \sum_{i=1}^N (v_i - \barv_i) \cdot \frac1N \sum_{j=1}^N \phi(\barx_i - \barx_j) [(v_j-\barv_j) - (v_i-\barv_i)] \, f_0^{\otimes N} \dX_0 \dV_0\\
C& = \int_{R^{2nN}}\sum_{i=1}^N (v_i - \barv_i) \cdot \left( \frac1N \sum_{j=1}^N \phi(\barx_i - \barx_j)(\barv_j - \barv_i) - \int_{R^{2n}}\phi(\barx_i-y) (w-\barv_i) f(y,w,t) \dy \dw \right) \\
&\hspace{3in} \times f_0^{\otimes N}\dX_0 \dV_0.
\end{split}
\]

Let us start with $C$. Apply the \HI\ first
\[
\begin{split}
C^2 \leq 2\cK \int_{R^{2nN}} \sum_{i=1}^N &\left|\frac1N \sum_{j=1}^N \phi(\barx_i - \barx_j)(\barv_j - \barv_i) - \int_{R^{2n}}\phi(\barx_i-y) (w-\barv_i) f(y,w,t) \dy \dw  \right|^2  \\
&\hspace{3in}   \times f_0^{\otimes N} \dX_0 \dV_0
\end{split}
\]
switching back to the Eulerian coordinates, whereby $\barx_i,\barv_i$ become dummy variables, we continue
\[
= 2\cK  \int_{R^{2nN}} \sum_{i=1}^N \left|\frac1N \sum_{j=1}^N \phi(\barx_i - \barx_j)(\barv_j - \barv_i) - \int_{R^{2n}}\phi(\barx_i-y) (w-\barv_i) f(y,w,t) \dy \dw  \right|^2 f_t^{\otimes N} \dbarX \dbarV
\]
All these terms, due to symmetry are independent of $i$. According to \cite[Lemma 3.3]{NP2021-orig}, and our flocking estimate \eqref{e:flockkinCS}, each can be estimated by
\[
\frac4N \sup_{(\barx',\barv'),(\barx'',\barv'')\in \supp f_t} |\phi(\barx'-\barx'')(\barv'-\barv'')|^2 \leq \frac{c}{N}e^{-\d t},
\]
Thus,
\[
C \leq ce^{-\d t} \cK^{1/2}.
\]

Turning back to $A$, we use the smoothness of the kernel and exponential flocking estimates \eqref{e:flockCS}, 
\[
\begin{split}
|A| & \leq c e^{-\d t} \sqrt{\cK} \left( \int_{R^{2nN}}\sum_{i=1}^N \left[ \frac1N \sum_{j=1}^N (|x_i -\barx_i|+ |x_j- \barx_j|)\right]^2 \, f_0^{\otimes N} \dX_0 \dV_0\right)^{1/2} \\
& \leq c e^{-\d t} \sqrt{\cK} \left( \int_{R^{2nN}}\sum_{i=1}^N \left[ |x_i -\barx_i|^2 + \frac1N \sum_{j=1}^N |x_j- \barx_j |^2 \right] \, f_0^{\otimes N} \dX_0 \dV_0\right)^{1/2}\\
& \leq  c e^{-\d t} \sqrt{\cK} \left(2 \int_{R^{2n N}} \left[\sum_{i=1}^N |x_i -\barx_i|^2 \right] \, f_0^{\otimes N} \dX_0 \dV_0\right)^{1/2}\\
& = c e^{-\d t} \sqrt{\cK} \sqrt{\cP}. 
\end{split}
\]
Finally, one can see that $B$ contributes a negative term,
\[
\sum_{i=1}^N (v_i - \barv_i) \cdot \frac1N \sum_{j=1}^N \phi(\barx_i - \barx_j) [(v_j-\barv_j) - (v_i-\barv_i)]  = \frac1N \sum_{i,j=1}^N\phi(\barx_i - \barx_j)( (v_i - \barv_i)\cdot (v_j -\barv_j) - |v_i-\barv_i|^2 )
\]
and symmetrizing,
\begin{multline*}
 = \frac12 \frac1N \sum_{i,j=1}^N\phi(\barx_i - \barx_j) (- |v_j-\barv_j|^2+ 2(v_i - \barv_i)\cdot (v_j-\barv_j) - |v_i-\barv_i|^2 ) \\
  = - \frac12 \frac1N\sum_{i,j=1}^N\phi(\barx_i - \barx_j)| (v_j- \barv_j) - (v_i- \barv_i)|^2 \leq 0.
 \end{multline*}

Collecting all of the above we obtain
\begin{equation}\label{e:Kfinal}
\ddt \cK \leq ce^{-\d t} ( \cK^{1/2} + \cK^{1/2} \cP^{1/2}).
\end{equation}

Denoting $x = 1+ \cP^{1/2}$, $y = \cK^{1/2}$ we obtain the system
\begin{equation}\label{e:xysys}
\dot{x} \leq y,\ x_0 = 1; \qquad \dot{y} \leq ce^{-\d t} x,\ y_0 = 0.
\end{equation}
\begin{claim} Any non-negative solution to \eqref{e:xysys} obeys an estimate $x\leq 1+ Ct$, $y \leq C \min\{1,t\}$, where $C = C(c,\d)$.
\end{claim}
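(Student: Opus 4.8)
The plan is to read \eqref{e:xysys} as a pair of integral inequalities and to extract from them a single first-order differential inequality for the exponentially weighted integral of $x$, on which a Gr\"onwall argument can be run. Integrating $\dot x \le y$ with $x_0 = 1$ and $\dot y \le ce^{-\d t}x$ with $y_0 = 0$, then swapping the order of integration in the resulting double integral and using $t - r \le t$ together with $x\ge 0$, I would first obtain the closed Volterra bound
\begin{equation*}
 x(t) \le 1 + c \int_0^t (t-r) e^{-\d r} x(r) \dr \le 1 + c t\, w(t), \qquad w(t) := \int_0^t e^{-\d s} x(s) \ds.
\end{equation*}

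The key point is that a naive bootstrap — inserting the ansatz $x \le 1 + Ct$ back into this inequality — does \emph{not} close unless $c/\d^2$ is small, since one iteration multiplies the constant by $c(\tfrac1\d + \tfrac{C}{\d^2})$. To avoid any smallness assumption I would instead differentiate $w$. From $\dot w = e^{-\d t} x$ and the displayed bound I get the linear differential inequality
\begin{equation*}
 \dot w \le e^{-\d t} + c t e^{-\d t} w, \qquad w(0) = 0.
\end{equation*}
The decisive feature of the flocking-induced factor $e^{-\d t}$ enters precisely here: the integrating exponent $G(t) = c\int_0^t s e^{-\d s} \ds$ is \emph{uniformly bounded}, $G(t) \le c/\d^2$, because $\int_0^\infty s e^{-\d s}\ds = 1/\d^2 < \infty$. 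Multiplying by $e^{-G(t)}$ and integrating yields $e^{-G(t)} w(t) \le \int_0^t e^{-\d s}\ds \le 1/\d$, hence the constant bound $w(t) \le \tfrac1\d e^{c/\d^2} =: W_\infty < \infty$.

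With $w$ bounded the two claimed estimates follow immediately. Feeding $w \le W_\infty$ into the Volterra bound gives $x(t) \le 1 + c W_\infty t$, which is the asserted $x \le 1 + Ct$ with $C = cW_\infty$. For $y$, the global estimate $y(t) \le c\, w(t) \le cW_\infty$ supplies the constant bound, while integrating $\dot y \le c e^{-\d t} x \le c(1 + Ct)$ on $[0,t]$ gives $y(t) \le c(t + \tfrac12 C t^2) \le c(1 + C) t$ for $t \le 1$; combining the two cases produces $y \le C' \min\{1,t\}$ after renaming constants.

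The only genuinely delicate step is the second paragraph: recognizing that the problem is not a standard Gr\"onwall inequality — which would force exponential-in-$t$ growth — but that the exponential decay of the coupling should be absorbed into a \emph{bounded} integrating factor. Once the weight $w$ is introduced and the finiteness of $\int_0^\infty s e^{-\d s}\ds$ is exploited, everything else is routine integration.
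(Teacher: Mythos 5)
Your proof is correct, and it takes a genuinely different route from the paper's. The paper works with the weighted quadratic energy $\e x^2 + y^2$: a Gr\"onwall estimate gives $\e x^2 + y^2 \leq \e \exp\big(\sqrt{\e}\,t + c/(\sqrt{\e}\,\d)\big)$, and the choice $\e = \d^2$ makes the resulting growth $x \lesssim e^{\d t/2}$ strictly slower than the decay $e^{-\d t}$ of the coupling; feeding this back into $\dot{y} \leq c e^{-\d t} x \lesssim e^{-\d t/2}$ shows $\dot{y}$ is integrable, hence $y \leq C\min\{1,t\}$, and then $\dot{x}\leq y$ gives $x \leq 1 + Ct$. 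You instead pass to the integral (Volterra) form of \eqref{e:xysys}, introduce the weighted integral $w(t) = \int_0^t e^{-\d s} x(s) \ds$, and observe that the relevant integrating factor $\exp\big(c\int_0^t s e^{-\d s} \ds\big)$ is \emph{uniformly bounded} because $\int_0^\infty s e^{-\d s} \ds = 1/\d^2 < \infty$; this yields the global bound $w \leq W_\infty$, from which both claimed estimates follow by direct integration (your case split $t\leq 1$ versus $t\geq 1$ for the $\min\{1,t\}$ bound on $y$ is handled correctly). Both arguments exploit the same flocking-induced exponential factor to defeat the naive Gr\"onwall growth, and both produce constants depending only on $(c,\d)$; the paper's version is more compact, while yours avoids the intermediate exponential bound $x\lesssim e^{\d t/2}$ and the ad hoc choice of $\e$, going straight to the linear-in-$t$ bound. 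Your side remark that a plain bootstrap of the ansatz $x \leq 1+Ct$ closes only when $c/\d^2$ is small is also accurate, and it correctly explains why some extra device (your bounded integrating factor, or the paper's tuned energy weight) is genuinely needed.
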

To see that let us fix an $\e>0$ to be determined later and compute
\[
\ddt ( \e x^2 + y^2) \leq 2 xy(\e + ce^{-\d t}) \leq \sqrt{\e}(\e x^2 +y^2) + \frac{c}{\sqrt{\e}} e^{-\d t}( \e x^2 + y^2).
\]
Thus,
\[
\e x^2 + y^2 \leq \e \exp\left\{ \sqrt{\e}t + \frac{1}{\sqrt{\e}\d} \right\}.
\]
Setting $\e = \d^2$, we can see that the growth rate of $x$ does not exceed $\d/2$, $x \lesssim e^{\d t/2}$. Plugging this into $y$-equation we obtain $\dot{y} \lesssim e^{-\d t/2}$. This proves the bound on $y$, and then solving for $x$, $x \leq 1 + Ct$.

Going back to the energies, we obtain
\[
\cK \leq C\min\{1,t^2\}, \qquad \cP \leq Ct^2.
\]
Due to the global bound on the support of the flock \eqref{e:flockCS}, \eqref{e:flockkinCS}, we also have $\cP \leq C N$. Thus,
\[
\cP \leq C \min\{ N, t^2\}.
\]
Consequently, we obtain the required
\[
\cK + \cP \leq C\min\{ N, t^2\}.
\]

\section{Propagation of chaos for forced system}

Using the basic energy estimates obtained in the previous section, we will now extend the result to the system with friction forces \eqref{e:CSR} and $\k>0$. It is well-known that the flocking behavior of solutions to \eqref{e:CSR}, even with constant $\th_i = 1$ does not always hold even for global kernels $\phi \geq c_0 >0$. The example exhibited in \cite{Ha-friction} shows misalignment dynamics when the initial configuration is symmetric $x_1 = -x_2$ and velocities are aimed in the opposite directions $v_1 = -v_2$.  The work \cite{LRS-friction} proved that this is, in a sense, the only situation when no flocking occurs. As long as the initial condition is {\em sectorial}, meaning that all $v_i(0) \in \Sigma$, where $\Sigma$ is an open conical sector of opening less than $\pi$, then the solutions align exponentially fast. 
\begin{proposition}[\cite{LRS-friction}]\label{p:flockR}
Suppose that 
\begin{equation}\label{e:kernelpower}
\phi(r) \geq \dfrac{\lambda}{(1+r^2)^{\b/2}},\quad  \l >0, \quad \b \leq 1.
\end{equation}
For any sectorial solution to \eqref{e:CSR} there exists $v_\infty\in \R^n$ and $\th_\infty >0$ with $|v_\infty|^p = \th_\infty$, such that one has
\begin{align}
    &\max_{i = 1,\ldots, N}(|v_i - v_\infty |+|\th_i - \th_\infty |) \leq Ce^{-\d t},\\
    & \sup_{t>0}\max\limits_{i,j = 1,\ldots, N} |x_i - x_j|= D < \infty.
\end{align}
\end{proposition}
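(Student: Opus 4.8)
The plan is to follow the \emph{Grassmannian reduction} of \cite{LRS-friction}: decouple the vector velocity dynamics into an angular (directional) part, which is insensitive to the self-propulsion force because that force is radial, and a scalar magnitude part, on which the monotonicity of $s\mapsto s^p$ supplies the missing contraction. The sectorial hypothesis is what makes both reductions viable. First I would establish forward invariance of the sector. Writing the velocity law in \eqref{e:CSR} as an alignment term plus a term parallel to $v_i$, one verifies the Nagumo tangency condition on $\partial\Sigma$: the alignment term $\frac1N\sum_j\phi(x_i-x_j)(v_j-v_i)$ is a non-negative combination pulling $v_i$ into the convex cone $\Sigma$, while $\s v_i(\th_i-|v_i|^p)$ is radial, hence tangent to every ray of $\Sigma$. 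Thus $v_i(t)\in\Sigma$ for all $t$, so there is a unit axis $e$ and angle $\alpha<\pi/2$ with $v_i\cdot e\geq |v_i|\cos\alpha>0$. Simultaneously the $\th_i$ equation is a consensus dynamics with non-negative weights, so $\min_j\th_j$ is non-decreasing and $\max_j\th_j$ non-increasing; in particular $\th_i(t)\in[\th_-,\th_+]$ with $\th_->0$, which with the self-propulsion confines $|v_i|$ to a compact interval $[s_-,s_+]\subset(0,\infty)$ once a lower kernel bound is available.

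For the main body I would decompose $v_i=|v_i|\,\o_i$ with $\o_i=v_i/|v_i|\in\S^{n-1}$. The crucial point is that the radial force drops out of the equation for $\o_i$, so the directional dynamics is driven \emph{purely} by the alignment term; since all $\o_i$ lie in a geodesically convex cap of opening $<\pi$, a maximum-spread argument gives exponential contraction of $\max_{ij}|\o_i-\o_j|$ at a rate controlled by $\phi(D)$. For the magnitudes $s_i=|v_i|$ I would run an order argument on $M=\max_i s_i$ and $m=\min_i s_i$: the alignment contribution to $\dot s_i$ pulls the extremes inward up to a directional error that is exponentially small by the previous step, while the scalar self-propulsion $\s s_i(\th_i-s_i^p)$ drives $s_i$ toward $\th_i^{1/p}$, and here the monotonicity of $s\mapsto s^p$ gives the correct sign on the gap $M-m$. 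Feeding in the consensus convergence $\th_i\to\th_\infty$ and combining with the angular estimate yields $v_i\to v_\infty:=\th_\infty^{1/p}\,\o_\infty$, with $|v_\infty|^p=\th_\infty$ forced by vanishing of the self-propulsion at the limit, all at rate $e^{-\d t}$.

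All of this requires a uniform lower bound $\phi(x_i-x_j)\geq\phi(D)>0$, hence a finite diameter $D$; but $D<\infty$ itself follows only from velocity alignment. I would close this the usual Cucker--Smale way: set up the Lyapunov pair $\dot{\mathcal X}\leq\mathcal V$ and $\dot{\mathcal V}\leq-\phi(\mathcal X)\mathcal V+(\text{error})$, where $\mathcal X=\diam\{x_i\}$, $\mathcal V$ is the velocity fluctuation, and the error is the self-propulsion contribution bounded via the angular and magnitude estimates above. Because the power-law kernel \eqref{e:kernelpower} with $\b\leq 1$ satisfies the heavy-tail condition $\int^\infty\phi=\infty$, the functional $\mathcal V+\int_0^{\mathcal X}\phi(r)\,\dr$ is monotone and bounds $\mathcal X$ uniformly, yielding $D<\infty$.

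The crux is that momentum is not conserved, since the self-propulsion breaks velocity-translation invariance, and the force $v(\th-|v|^p)$ is \emph{expanding} when $|v|^p<\th$, so the naive enstrophy-dissipation estimate that works for \eqref{e:CS} fails outright. The Grassmannian reduction is exactly what resolves this: it confines the expanding behavior to the scalar radial channel, where a one-dimensional monotonicity argument applies, and isolates a genuinely contracting angular channel on the sphere. The main technical difficulty I expect is closing the angular--magnitude--diameter bootstrap \emph{simultaneously} rather than sequentially, since each of the three estimates is needed to justify the others.
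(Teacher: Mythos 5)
Your skeleton --- sector invariance, polar decomposition in which the radial self-propulsion drops out of the directional dynamics, $\th$-consensus, a monotonicity argument for the speeds, and a final bootstrap for the diameter --- is indeed the strategy of \cite{LRS-friction}, which is where this proposition is proved (the present paper only cites it and reproves the kinetic analogue, \prop{p:flockRkin}, by the same method). However, your mechanism for closing the bootstrap is wrong, and it fails exactly at the step where the power-law hypothesis \eqref{e:kernelpower}, as opposed to the mere heavy-tail condition \eqref{e:ht}, is genuinely needed. You assert that the functional $\mathcal V+\int_0^{\mathcal X}\phi(r)\,\mathrm{d}r$ ``is monotone and bounds $\mathcal X$ uniformly.'' It is not monotone: here $\dot{\mathcal V}\leq -\phi(\mathcal X)\mathcal V+E(t)$, where the self-propulsion error $E$ is only $O(1)$ until one already knows decay of the angular spread, of the $\th$-spread, and of the quantities $\th_i-|v_i|^p$; and your own derivation of those decays presupposes a uniform lower bound $\phi(\cD)\geq\phi(D)>0$, i.e.\ a finite diameter. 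That is circular, and integrating an $O(1)$ error gives only linear growth of the functional, hence no diameter bound. The actual argument breaks the circle using the kernel's power-law form: boundedness of velocities gives $\cD(t)\lesssim t$ unconditionally, so $\phi(\cD(t))\gtrsim \langle t\rangle^{-\b}$, a time-dependent rate with divergent integral; Gr\"onwall with this rate gives (for $\b<1$) stretched-exponential decay $e^{-c\langle t\rangle^{1-\b}}$ of $1-\cos\g$ and of $\cQ$, hence of $\cR-1$ and of the velocity fluctuation $\cA$; this makes $\cA$ integrable in time, which is what yields $D<\infty$, and only then does one re-run the estimates with the constant rate $\phi(D)$ to obtain exponential convergence (the case $\b=1$ requires one extra iteration, improving $\cD\lesssim\langle t\rangle^{1-\a/2}$ and hence the effective exponent to some $\tilde\b<1$). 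Note that if your Lyapunov argument were sound, the proposition would hold under \eqref{e:ht} alone; the appearance of \eqref{e:kernelpower} in the statement should have been a warning.

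A secondary gap: in dimension $n\geq 3$ the ``maximum-spread argument'' on the sphere does not apply directly to $\g=\max_{i,j}\widehat{(v_i,v_j)}$, because the sign identities used to get contraction, such as $\cos\widehat{(u,w)}-\cos\widehat{(v,u)}\cos\widehat{(v,w)}=\sin\widehat{(v,u)}\sin\widehat{(v,w)}\geq 0$, require the three velocities to be coplanar with additive angles. Overcoming this is the real content of the ``Grassmannian reduction'' you invoke by name: one projects the velocity equation onto the two-dimensional planes containing the sector axis, proves contraction of the maximal projected angle $\g^{2D}$ over the compact family $\cG(1,n-1)$, and then shows $\g\leq\g^{2D}$ by a law-of-cosines comparison. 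In your write-up the name is attached only to the radial/angular splitting, and the genuine $n$-dimensional difficulty is asserted away.
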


It is within the context of sectorial solutions that we will cast the propagation of chaos result. But first we establish a similar flocking estimates for solutions of the corresponding kinetic model.

\subsection{Grassmannian reduction for Vlasov-alignment equation}

Let us denote $\O = \R^n \times \R^n \times \R_+$.  The Vlasov equation corresponding to \eqref{e:CSR} is given by
 \begin{equation}\label{e:VER}
\p_t f + v\cdot \n_x f + \n_v \cdot (f F(f)) + \n_v\cdot(f R) + \n_\th\cdot(f\Th(f)) = 0, \quad (x,v,\th) \in \O, \quad t>0,
\end{equation}
subject to the initial condition 
\begin{equation}\label{e:VEic_Rayleigh}
f(x,v,\th,0) = f_0(x,v,\th),
\end{equation}
where
\[
\begin{split}
F(f)(x,v,\th) &= \int_{\O} \phi(x-y) (w-v) f(y,w,\eta, t) \dz \dw\de,\\ 
R(x,v,\th) & = \s v(\th-|v|^p), \quad \s > 0, \quad p>0,\\
\Th(f)(x,v,\th) &= \k \int_{\O} \phi(x-y) (\eta-\th) f(y,w,\eta,t) \dz \dw\de.
\end{split}
\]

In this section, we will prove a similar flocking result for the sectorial solutions of \eqref{e:VER}. Let us define what they in the kinetic context.
\begin{definition}
A solution $f$ to \eqref{e:VER} is called {\em sectorial} if there exists a conical region $\Sigma$ lying on one side of a hyperplane, i.e. with conical opening less than $\pi$ such that $v\in \Sigma$ for any $v$ in the velocity support of $f$, $(x,v,\th)\in \supp f$ for some $x,\th$.
\end{definition}

Since the equation \eqref{e:VER}  is rotation invariant, it will be convenient to assume that our solution belong the upper-half space:  there exists $\e>0$ such that 
\begin{equation}\label{def:sectorial}
    v_n  \geq \e |v| , \quad \forall (x,v,\th)\in \supp f, 
\end{equation}
By the weak maximum principle discussed below in Remark~\ref{r:wmp}, it follows that if $f$ is sectorial initially, then it will remain so for all time and the velocity support will lie in the same sector $\Sigma$.
 
Let us state our main result now.

\begin{proposition}\label{p:flockRkin}
Suppose the kernel satisfies  \eqref{e:kernelpower}.
For any sectorial solution $f$ to \eqref{e:VER} with initial compact support one has
\begin{align}\label{kinCSR_flock_est}
\sup\limits_{t>0}\diam \supp f_t  <\infty, 
\end{align}
and there exist $v_\infty \in \R^n$, $\th_\infty \in \R_+$, with $|v_\infty|^p = \th_\infty$ such that
\begin{align}
\max\limits_{(x,v,\th) \in \supp f_t}(|\th - \th_\infty| + |v-v_\infty|) \leq c e^{- \d t }. \label{kin_B_est}
\end{align}
\end{proposition}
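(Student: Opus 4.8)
The plan is to run the argument entirely in the Lagrangian picture, exactly as in \sect{sec:classicalCS}. Writing $\Phi_t = (x(t),v(t),\th(t))$ for the characteristic flow of \eqref{e:VER}, one has $\supp f_t = \Phi_t(\supp f_0)$, so every support-diameter quantity is governed by the characteristic system
\[
\dot{x} = v,\qquad \dot{v} = F(f)(x,v,\th) + \s v(\th - |v|^p),\qquad \dot{\th} = \Th(f)(x,v,\th),
\]
tracked uniformly over initial data in $\supp f_0$. The mean-field forces $F(f)$ and $\Th(f)$ are averages against $f_t$ whose integration variables range over $\supp f_t$, so they are bounded by the corresponding support fluctuations; this self-containment is what lets one transcribe the Grassmannian reduction of \cite{LRS-friction} into the kinetic averaging framework, yielding the kinetic counterpart of \prop{p:flockR}.

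First I would record the confinement estimates. By the weak maximum principle of Remark~\ref{r:wmp} the velocity support stays in the fixed sector $\Sigma$, so $v_n \geq \e|v|$ throughout. Since $\Th(f)$ is a genuine averaging operator --- pulling each $\th$ toward a convex combination of the remaining values --- the $\th$-coordinate on the support stays trapped in its initial interval $[\underline{\th},\overline{\th}]$ with $\underline{\th}>0$, and the $\th$-spread is non-increasing. Feeding this bound into the sign of the self-propulsion term $\s v(\th-|v|^p)$, a trapping argument then confines the speeds $|v|$ on the support to a fixed compact interval $[\underline{s},\overline{s}]$ with $\underline{s}>0$ for all time.

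The heart of the method is the directional reduction. Decompose $v = |v|\,\w$ with $\w = v/|v|$. A direct computation gives $\dot{\w} = |v|^{-1}(\Id - \w\otimes\w)\dot{v}$, and because the self-propulsion $\s v(\th-|v|^p)$ is \emph{radial}, i.e. parallel to $v$, it is annihilated by $\Id - \w\otimes\w$; hence $\dot{\w} = |v|^{-1}(\Id-\w\otimes\w)F(f)$ depends only on the alignment force. This is exactly what decouples the ``bad'' self-propulsion from the angular dynamics, and it is the step I expect to be the main obstacle: the self-propulsion is the only term with no a priori contractive sign on raw velocity differences, and without this radial cancellation it could amplify fluctuations. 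Since the sector hypothesis confines $\w$ to a geodesically convex cap on which chordal and angular distances are comparable (and $|v|^{-1}$ is bounded by the previous step), the directional spread contracts under $F(f)$ at a rate controlled by $\phi(\diam\supp f_t)$, just as velocity fluctuations do in the forceless case. In parallel the speed spread is driven toward the common target $\th^{1/p}$ by the monotonicity of $s\mapsto \s s(\th-s^p)$; together these control the full velocity amplitude $A(t) = \max\{|v-v'| : (x,v,\th),(x',v',\th')\in\supp f_t\}$. The delicate point is to make the angular and speed contractions quantitatively compatible so that both feed a single closed \GL\ system, faithfully adapting the discrete estimates of \cite{LRS-friction}.

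Finally I would close the spatial bound. Writing $\cD_x(t)$ for the spatial diameter of $\supp f_t$, the above yields coupled inequalities $\dot{\cD}_x \leq A$ and $\dot{A} \leq -c\,\phi(\cD_x)\,A$ up to exponentially small corrections, the self-propulsion now entering as a sink rather than a source. Since the power-law lower bound \eqref{e:kernelpower} with $\b\leq1$ gives $\int_0^\infty \phi\,\dr = \infty$, the standard Cucker--Smale Lyapunov estimate yields $\sup_t \cD_x = D < \infty$, which is \eqref{kinCSR_flock_est}. Once $D$ is finite we have $\phi(D)>0$, so every contraction rate above becomes genuine exponential decay: the $\th$-spread, the speed spread and the directional spread each decay like $e^{-\d t}$, whence $A(t)\leq ce^{-\d t}$. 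The trapped monotone quantities then converge to limits $v_\infty,\th_\infty$, and the self-propulsion equilibrium forces $|v_\infty|^p = \th_\infty$, giving \eqref{kin_B_est}.
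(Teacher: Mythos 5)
Your overall architecture matches the paper's: Lagrangian characteristics, the confinement lemmas (sector preservation by a weak maximum principle, trapping of $\th$ in its initial range, upper and lower speed bounds), and crucially the radial cancellation $(\Id - \w\otimes\w)v = 0$ that removes the self-propulsion from the directional dynamics. That said, you gloss the step that gives the method its name: in dimension $n\geq 3$ the maximal angle $\g$ between directions in the support does \emph{not} directly satisfy a contraction. Differentiating it produces terms of the form $\cos\widehat{(u,w)} - \cos\widehat{(v,u)}\cos\widehat{(v,w)}$ for third directions $w$, whose sign is favorable only for coplanar triples with angle sum below $\pi$. Comparability of chordal and angular metrics on a convex cap does not supply this sign. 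The paper (following \cite{LRS-friction}) must project onto the two-dimensional planes containing the $e_n$-axis, prove the contraction for the projected maximal angle $\g^{2D}$ where the planar identities apply, and then dominate $\g \leq \g^{2D}$ via a law-of-cosines argument. Deferring to ``faithfully adapting LRS'' names the right source but omits the mechanism.

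The genuine gap is your closing step. You assert that the system collapses to $\dot{\cD} \leq \cA$, $\dot{\cA} \leq -c\,\phi(\cD)\,\cA$ ``up to exponentially small corrections'' and invoke the standard Cucker--Smale Lyapunov estimate. That is structurally wrong here. After the reduction, the fluctuation splits as $\cA^2 \lesssim (\cR-1) + (1-\cos\g)$, and the two pieces contract at \emph{mismatched} rates: $1-\cos\g$ and $\cQ$ contract at rate $\phi(\cD)$, while $\cR-1$ contracts at a constant rate $c_1$ but is \emph{forced} by $c_2(1-\cos\g) + c_3\cQ$ with order-one coefficients. These forcing terms are not exponentially small -- they are precisely the quantities whose decay is being proven -- and since $\phi(\cD)$ has no a priori positive lower bound, no fixed linear combination of $\cR-1$, $1-\cos\g$, $\cQ$ serves as a Lyapunov functional (one would need $K\phi(\cD) \gtrsim c_2$ uniformly in time), so the functional $\cA + c\int_0^{\cD}\phi$ does not close. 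The paper instead bootstraps in time: bounded speeds give $\cD \lesssim t$, hence $\phi(\cD)\gtrsim \langle t\rangle^{-\b}$; for $\b<1$, \GL\ gives stretched-exponential decay $e^{-c\langle t\rangle^{1-\b}}$ of $1-\cos\g$ and $\cQ$, a Duhamel argument transfers this to $\cR-1$, whence $\cA$ is integrable, $\cD$ stays bounded, and only then does one rerun the estimates with $\phi(\cD)\geq \phi(D)>0$ to get genuine exponential decay and extract $v_\infty$, $\th_\infty$. The endpoint $\b=1$ requires a further iteration: the first pass yields only polynomial decay $\langle t\rangle^{-\a}$, which improves the diameter growth to $\langle t\rangle^{1-\a/2}$ and reduces matters to an effective exponent $\tilde{\b}<1$. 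Without this two-tier (and, at $\b=1$, three-tier) bootstrap your argument does not close, and your proposal contains no substitute for it.
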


As in the discrete case the proof is based on examination of kinetic characteristics of the equation given by 
\begin{equation}\label{e:kinCSR}
\left\{\begin{split}
\dot{x} & = v, \qquad x(0) = x_0 \\
\dot{v} & =  \int_{\O} \phi(x-y) (w-v) f(y,w,\eta,t) \dy \dw \de +\s v(\th-|v|^p), \quad v(0) = v_0\\
\dot{\th} &= \k \int_{\O} \phi(x-y) (\eta-\th) f(y,w,\eta,t) \dy \dw \de, \quad \th(0) = \th_0.
\end{split}\right.
\end{equation}
Let us denote
\begin{align*}
    \cD(t) &= \max_{(x,v,\th),(x',v',\th') \in \supp f}|x-x'|,\\
    \cA(t) &= \max_{(x,v,\th),(x',v',\th')\in \supp f}|v-v'|,\\
    \cQ(t) &= \max_{(x,v,\th),(x',v',\th')\in \supp f}|\th-\th'|,
\end{align*}
\begin{align*}
M &= \int_\Omega f(x,v,\th,t)\dx \dv \dth,\quad \th_\infty = \dfrac{1}{M}\int_\O \th f(x,v,\th,t)\dx \dv  \dth,\\
 \th_+(t) &= \max\limits_{(x,v,\th)\in \supp f_t} \th,\qquad \th_-(t) = \min\limits_{(x,v,\th)\in \supp f_t} \th.
\end{align*}
Then we have
\begin{equation}
    \ddt \cD  \leq \cA,
    \end{equation}
and
\begin{equation}\label{theta}
    \ddt \cQ  \leq - \phi (\cD) \cQ.
\end{equation}
It is not hard to see that $\th_+$ is decreasing and $\th_-$ is increasing. Thus, 
\begin{equation}
   \th_+(t) \leq \th^*, \quad \th_-(t) \geq \th_* \quad \forall t\ge 0,  
\end{equation}
where
$\th^* = \th_+(0) $ and $  \th_* = \th_-(0)$.

Before proceed let us discuss boundedness of velocity support of $f$ and the weak maximum principle.

\begin{lemma}[boundedness]\label{v:bdd}
There exists a constant $C$ which depends on the initial data such that for any $(x,v,\th) \in \supp f_t$, one has
\begin{equation}
    |v(t)| \leq C, \qquad \forall t > 0.
\end{equation}
\end{lemma}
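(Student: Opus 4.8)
The plan is to derive a closed differential inequality for the maximal speed
\[
V_+(t) := \max_{(x,v,\th)\in \supp f_t} |v|
\]
and to exploit the strong dissipation produced by the Rayleigh term $-\s v|v|^p$ at large speeds. Since $\supp f_t$ is transported by the characteristic flow \eqref{e:kinCSR}, the quantity $V_+$ is locally Lipschitz in $t$, and by \RL\ its almost-everywhere derivative is controlled by the evolution of $|v|$ along any characteristic that realizes the maximum. Thus it suffices to estimate $\ddt \tfrac12|v|^2$ along a single characteristic and evaluate it at a maximizer.

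Two inputs make the estimate closed. First, the characteristic speed $\th$ stays in a bounded band: as already established above, $\th_+$ is non-increasing, so $\th(t)\leq \th^*$ for all $(x,v,\th)\in\supp f_t$. Second, the alignment force obeys the crude bound $|F(f)(x,v,\th)|\leq 2\|\phi\|_\infty M\, V_+(t)$ on the support. Indeed, $\phi\leq \|\phi\|_\infty$ since $\phi$ is non-increasing, the total mass $M=\int_\O f$ is conserved by the transport structure of \eqref{e:VER}, and both $|w|$ and $|v|$ are at most $V_+$ on $\supp f_t$, so
\[
\left| \int_\O \phi(x-y)(w-v) f(y,w,\eta,t)\,\dy\,\dw\,\de \right| \leq \|\phi\|_\infty M (|v| + V_+) \leq 2\|\phi\|_\infty M\, V_+.
\]

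Along a characteristic one computes
\[
\ddt \tfrac12 |v|^2 = v\cdot F(f) + \s |v|^2(\th - |v|^p).
\]
Evaluating at a point where $|v|=V_+$ and inserting $\th\leq\th^*$ together with the force bound gives
\[
\ddt \tfrac12 V_+^2 \leq \left(2\|\phi\|_\infty M + \s\th^*\right)V_+^2 - \s V_+^{p+2}.
\]
Because $p>0$, the dissipative term is of order $p+2>2$ in $V_+$ and overwhelms the quadratic gain once $V_+$ is large. Precisely, the right-hand side is non-positive as soon as $V_+^p \geq R$, where $R := (2\|\phi\|_\infty M + \s\th^*)/\s$. Hence the interval $[0,R^{1/p}]$ is forward invariant for $V_+$, and we conclude $V_+(t)\leq \max\{V_+(0), R^{1/p}\} =: C$ for all $t>0$, which is exactly the claimed bound since $V_+(0)$ is finite by the compact support assumption.

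The only non-mechanical step is the differentiation of the maximum $V_+$, handled by the standard \RL\ envelope argument used throughout the flocking literature; everything else is a one-line Riccati-type comparison. I note that the sectorial hypothesis is \emph{not} needed for this lemma: boundedness of the velocity support follows purely from the super-quadratic damping $|v|^p$ with $p>0$ together with the a priori control $\th\leq\th^*$.
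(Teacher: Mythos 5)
Your proof is correct, and its skeleton matches the paper's: differentiate the maximal speed over $\supp f_t$ via Rademacher's Lemma, use the a priori bound $\th \le \th^* = \th_+(0)$, and close with an ODE comparison. The genuine difference is in the treatment of the alignment term, which is the heart of the estimate. The paper evaluates the derivative through a linear functional $\ell$ with $|\ell|=1$, $\ell(v)=|v_+|$, and observes that at the maximizer $\ell(w-v)=\ell(w)-|v_+|\le |w|-|v_+|\le 0$ for every $w$ in the velocity support, so the alignment integral is dropped entirely by sign; this yields the clean logistic inequality $\frac{d}{dt}|v_+| \le \s |v_+|(\th^*-|v_+|^p)$ and, beyond mere boundedness, the sharp asymptotic bound $|v_+|(t)\le \sqrt[p]{\th^*}+\cO(e^{-\s\th^* t})$. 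You instead discard this sign structure, bound the alignment force crudely by $2\|\phi\|_\infty M V_+$, and let the superquadratic damping $-\s V_+^{p+2}$ absorb the resulting quadratic gain, concluding via forward invariance of $[0,R^{1/p}]$. This is valid because $p>0$ and $\phi$ is bounded, and it suffices for the role the lemma plays later in the paper, where only boundedness is used. What you give up: your constant $\max\{V_+(0),\,((2\|\phi\|_\infty M+\s\th^*)/\s)^{1/p}\}$ depends on $\|\phi\|_\infty$ and $\s$ and exceeds the paper's asymptotic threshold $(\th^*)^{1/p}$; moreover, the paper's sign argument would survive for unbounded (e.g.\ weakly singular) kernels, where your force bound fails. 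Your closing remark that sectoriality is not needed is consistent with the paper, which also states and proves the lemma without that hypothesis.
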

\begin{proof}
Let
\[
|v_+|(t) = \max\limits_{(x,v,\th)\in \supp f_t} |v|.
\]
At time $t$, let $\ell \in (\R^d)^*, |\ell| =1$, $(x,\, v,\th) \in \supp f_t $ such that  $|v_+| = \ell(v)$. Then, by Rademacher's Lemma,
\[
\begin{split}
\ddt |v_+| &= \int_{\O}\phi(x - z)\,\ell (w - v)f(z,w,\eta,t)\,\dz\dw\de  +\s\ell(v)(\th-|v|^p)\\ 
  &\leq \s |v_+|(\th^*-|v_+|^p).
\end{split}
\]
Hence, if $\th^* \leq |v_+|^p$ then 
\[
|v_+|(t) \leq |v_+|(0) \quad \forall t> 0.
\] 
Otherwise, we have
\begin{equation*}
   \ddt |v_+|^p \leq \s p |v_+|^p  (\th^*-|v_+|^p).
\end{equation*}
Solving the above ODI gives 
\begin{equation}\label{v:upper_bound}
    |v_+|(t) \leq \dfrac{\sqrt[p]{\th^*} e^{\s\th^* t}}{(c+e^{\s p\th^* t})^{1/p}} = \sqrt[p]{\th^*} +\cO (e^{-\s\th^* t}),
\end{equation}
where $c$ is a positive constant depending on initial data. Thus, $|v_+|(t)$ is bounded for all $t > 0$. 
\end{proof}
 \begin{lemma}[weak maximum principle]\label{l:wmp} If for a given functional $\ell \in (\R^n)^*$, all velocity vectors $v_0$ that lie in the support of the initial flock, $(x_0,v_0,\th_0)\in \supp f_0$, satisfy
 \begin{equation*}
     \ell(v_0) \geq 0, 
 \end{equation*}
 then at any positive time 
 \begin{equation*}
    \ell(v) \geq 0, \qquad \forall t>0, \ (x,v,\th) \in \supp f_t.
 \end{equation*}
 \end{lemma}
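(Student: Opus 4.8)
The plan is to track the minimal value of $\ell$ along the velocity support and show it stays nonnegative by a Gronwall-type integrating-factor argument, in the same spirit as the proof of \lem{v:bdd}. Since the velocity support of $f_t$ is the push-forward of $\supp f_0$ under the characteristic flow \eqref{e:kinCSR}, and that flow keeps the support in a fixed compact set on finite time intervals (the velocity component is bounded by \lem{v:bdd}, while $\th$ stays in $[\th_*,\th^*]$ and $x$ moves at bounded speed), I would define
\[
m(t) = \min_{(x,v,\th)\in\supp f_t} \ell(v).
\]
As a minimum of the functions $t\mapsto \ell(v(t;x_0,v_0,\th_0))$ over the compact parameter set $\supp f_0$, with the characteristics depending smoothly on their initial data, $m$ is locally Lipschitz; by \RL\ it is therefore differentiable for a.e. $t$, and at such points $\dot m(t)=\ell(\dot v)$ evaluated along a minimizing characteristic.

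First I would compute, at a time of differentiability with minimizing point $(x,v,\th)$,
\[
\ddt m(t) = \int_{\O}\phi(x-y)\,\ell(w-v)\,f(y,w,\eta,t)\,\dy\dw\de + \s\, \ell(v)\,(\th-|v|^p).
\]
The decisive observation is that the alignment integral is nonnegative: for every $(y,w,\eta)\in\supp f_t$ one has $\ell(w)\geq m(t)=\ell(v)$, so $\ell(w-v)=\ell(w)-\ell(v)\geq 0$, and since $\phi\geq 0$ and $f\geq 0$ the whole integral is $\geq 0$. The friction term equals exactly $\s\,m(t)\,(\th-|v|^p)$ because $\ell(v)=m(t)$ at the minimizer. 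Hence
\[
\ddt m(t) \geq a(t)\,m(t), \qquad a(t):=\s\,(\th-|v|^p),
\]
where $a$ is bounded, $|a(t)|\leq K$, thanks to the uniform velocity bound of \lem{v:bdd} together with $\th\in[\th_*,\th^*]$.

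Finally I would integrate this differential inequality. Writing $g(t)=m(t)\exp\big(-\int_0^t a(s)\ds\big)$, the product is absolutely continuous and $\dot g=(\ddt m - a m)\exp\big(-\int_0^t a(s)\ds\big)\geq 0$ a.e., so $g$ is nondecreasing and $m(t)\geq m(0)\exp\big(\int_0^t a(s)\ds\big)$. Since $m(0)\geq 0$ by hypothesis and the exponential factor is positive, $m(t)\geq 0$ for all $t>0$, which is precisely the assertion $\ell(v)\geq 0$ on $\supp f_t$.

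The step I expect to be the main obstacle is the rigorous handling of the nonsmooth minimum: verifying that $m$ is Lipschitz and that its a.e. derivative is realized along a (measurably selected) minimizing characteristic, so that $a(t)$ is a genuine bounded measurable function to which the integrating-factor estimate applies. Once this envelope differentiation is justified via \RL, the sign of the alignment term together with the fact that the friction term is proportional to $m$ itself make the conclusion immediate; in particular, at any instant where $m(t)=0$ the friction term vanishes, which is the structural reason the minimum cannot be driven below zero.
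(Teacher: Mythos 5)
Your proposal is correct and takes essentially the same route as the paper's proof: track the minimum of $\ell$ over the velocity support, differentiate it via \RL, observe that the alignment integral is nonnegative at a minimizer, and close with a Gr\"onwall/integrating-factor argument using the boundedness from \lem{v:bdd} and $\th \in [\th_*,\th^*]$. If anything, your version is slightly more careful: by keeping $a(t)=\s(\th-|v|^p)$ exact and only using $|a(t)|\leq K$, you avoid the paper's intermediate bound $\s\ell(v)(\th-|v|^p)\geq \s\ell(v)(\th_*-|v|^p)$, which tacitly presumes $\ell(v)\geq 0$ — the very fact being proved.
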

\begin{proof}
 At time $t$, let 
\[\ell(v) = \min\limits_{ (z,w,\eta)\in \supp f} \ell(w).
\]
By Rademacher's Lemma,
\begin{equation*}
  \ddt \ell(v) = \int_{\O}\phi(x - z)\,\ell (w - v)f(z,w,\eta,t)\,\dz\dw\de  +\s\ell(v)(\th-|v|^p)
  \geq \s\ell(v)(\th_*-|v|^p).
\end{equation*}
 Then by Lemma \ref{v:bdd} we get
\begin{equation*}
    \ddt \ell(v)\geq c\,\ell(v), 
\end{equation*}
where $c$ is constant. Solving this ODI we obtain the desired conclusion,
\begin{equation*}
    \ell(v) \geq \ell(v_0) e^{ct} \geq 0, \quad \forall t>0.
\end{equation*}
\end{proof}

\begin{remark}\label{r:wmp}
By the weak maximum principle we note that if the support of $f_0$ in $v$ lies in the convex sector defined by
\[
\Sigma_{\cF} = \bigcap\limits_{\ell \in \cF}\left \{v \in \R^n: \ell(v) \geq 0\right \},
\]
where $\cF$ is an arbitrary set of linear functionals on $\R^n$, then the velocity support of $f_t$ will be confined to that sector for all time.
Since the system \eqref{e:kinCSR} is invariant under rotations, without loss of generality we can assume that the support of $f_0$ in $v$ lies above the hyperplane $\ip_n = \{v_n = 0\}$, where $v_n$ is the $n$-th coordinate of vector $v$.
\end{remark}

\begin{lemma}\label{V:bdd_below}
For any sectorial solution $f$ to \eqref{e:VER} there exists a positive constant $c_0$ depending on the initial data such that
\begin{equation}\label{v:lower_bound}
    |v| \geq c_0, \quad \forall (x,v,\th) \in \supp f_t.
\end{equation}
\end{lemma}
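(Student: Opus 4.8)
The plan is to bound from below the minimal value of the $n$-th velocity component over the support and then transfer this to a bound on $|v|$ via sectoriality. Since $v_n \leq |v|$ for every point, one has $|v| \geq v_n \geq \min_{\supp f_t} v_n$ throughout the support; hence it suffices to prove that
\[
\nu(t) := \min_{(x,v,\th)\in\supp f_t} v_n
\]
stays bounded below by a positive constant. The whole point of working with $v_n$ rather than $|v|$ directly is that $e_n$ is the axis of the sector, which is what makes the alignment contribution sign-definite below.

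Next I would differentiate $\nu$ along the characteristics \eqref{e:kinCSR} using Rademacher's Lemma. At time $t$ pick a minimizing characteristic $(x,v,\th)\in\supp f_t$ with $v_n=\nu(t)$; projecting $\dot v$ onto $e_n$ gives
\[
\ddt \nu = \int_\O \phi(x-z)(w_n - v_n) f(z,w,\eta,t)\,\dz\dw\de + \s\, v_n(\th - |v|^p).
\]
The alignment integral is nonnegative: $\phi \geq 0$ by \eqref{e:kernelpower}, and $w_n \geq \nu(t)=v_n$ for every $w$ in the velocity support by minimality. For the self-propulsion term I would use sectoriality \eqref{def:sectorial} again at the minimizing point, namely $|v|\leq v_n/\e = \nu/\e$, together with $\th \geq \th_* > 0$ established above. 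Thus $\th - |v|^p \geq \th_* - (\nu/\e)^p$, and dropping the nonnegative alignment term yields the logistic differential inequality
\[
\ddt \nu \geq \s\,\nu\left(\th_* - (\nu/\e)^p\right).
\]

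The right-hand side is nonnegative for $0\leq \nu \leq \e\,\th_*^{1/p}$ with stable equilibrium at $\nu=\e\,\th_*^{1/p}$, so a standard comparison argument gives $\nu(t)\geq \min\{\nu(0),\, \e\,\th_*^{1/p}\} =: c_0$, whence $|v|\geq \nu(t)\geq c_0$ and the lemma follows. I expect the main obstacle to be ensuring that the barrier constant $c_0$ is strictly positive, i.e. that $\nu(0)>0$. For compactly supported initial data whose velocity support sits strictly inside the open sector this is immediate. If instead the velocity support were to touch the hyperplane $\{v_n=0\}$ (which by \eqref{def:sectorial} forces $v=0$ there), I would note that at such a point the self-propulsion vanishes while the alignment term equals $\int_\O \phi(x-z)\,w_n\, f\,\dz\dw\de > 0$, strictly positive because $\phi>0$ everywhere and the flock carries positive velocity mass off the hyperplane. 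Hence $\ddt \nu > 0$ there, so $\nu$ escapes zero instantaneously and the logistic barrier applies for all positive times.
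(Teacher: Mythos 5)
Your proposal is correct and takes essentially the same route as the paper: both track $\nu(t)=\min_{(x,v,\th)\in\supp f_t} v_n$ along characteristics via Rademacher's Lemma, drop the alignment integral (nonnegative by minimality), and use sectoriality $|v|\leq \e^{-1}v_n$ together with $\th\geq\th_*$ to arrive at the same logistic inequality $\ddt \nu \geq \s\nu\left(\th_*-\e^{-p}\nu^p\right)$, concluding $|v|\geq v_n\geq c_0$. The only differences are cosmetic: the paper solves the ODI explicitly (splitting into the cases $\th_*\leq\e^{-p}v_n^p$ and its complement) where you use a comparison/barrier argument, and your closing remark about initial data touching $\{v_n=0\}$ addresses a degenerate case the paper's proof silently assumes away (its constant $c$ requires $\nu(0)>0$).
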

\begin{proof}
At time $t$, let $(x,v,\th)$ be a minimizer for $\min \limits_{(x,v,\th)\in \supp f_t} v_n$. Then 
\begin{equation}\label{e:vnODI}
    \ddt v_n =  \int_{\O}\phi(x - z) (w_n - v_n)f(z,w,\eta,t)\,\dz\dw\de   +  \s v_n(\th-|v|^p) 
  \geq \s v_n(\th_*-\e^{-p}v_n^p).
\end{equation}
If $\th_* \leq \e^{-p}v_n^p$ then 
\[
|v| \geq \e\sqrt[p]{\th_*}.
\]
Otherwise, solving \eqref{e:vnODI} we get
\[
v_n \geq \dfrac{\e\sqrt[p]{\th_*} e^{\s\th_* t}}{(c+e^{p\s \th_*   t})^{1/p}},
\]
where $c$ is a positive constant which depends on the initial data. Then the lemma follows.
\end{proof}
\begin{remark}
Lemma \ref{V:bdd_below} tells us that for a sectorial solution $f$, $\supp f(x,\cdot,\th)$ stays away from the origin. Then, by Lemma \ref{v:bdd}, it implies that $\supp f(x,\cdot,\th)$ is contained in a sector. Lemma \ref{V:bdd_below} also implies that for any sectorial solution $f$ one has
\begin{equation}\label{vm_bdd}
    |v_{-}|(t) \geq c_0, \quad \forall t > 0,
\end{equation}
where $|v_-| (t) = \min \limits_{(x,v,\th) \in \supp f} |v(t)|$.
\end{remark}
\medskip

\begin{proof}[Proof of \prop{p:flockRkin}]
From now on we consider a sectorial solution $f$ to the system \eqref{e:VER}. Denoting $\tilde{r}= \dfrac{r}{|r|}$ for any vector $r \in \R^n$. One has
\begin{align}
\ddt \tv =  \dfrac{1}{|v|}\left(\Id - \dfrac{v}{|v|}\otimes\dfrac{v}{|v|}\right)\dot{v} =  \int_{\O}\dfrac{|w|}{|v|}\phi (x-z)(\Id - \tv\otimes\tv)\tw\, f(z,w,\eta,t)\dz\dw\de.
\end{align}
Here, we used $(\Id - \tv\otimes\tv)v = 0$. 

\medskip
Denoting by $\widehat{(v,u)}$ the angle between two vectors $v$ and $u$, then
$\cos \widehat{(v,u)} = \tv\cdot\tu$. Thus, if $(x,v,\th), (y,u,\zeta)$ are the solutions to \eqref{e:kinCSR} with respect to the initial conditions $(x_0,v_0,\th_0), (y_0,u_0,\zeta_0)$, respectively, then
\begin{equation}\label{e:cos}
\begin{split}
    \ddt \cos \widehat{(v,u)} &= \int_{\O}\dfrac{|w|}{|v|}\phi (x-z)[\cos\widehat{(u,w)} - \cos\widehat{(v,u)}\cos\widehat{(v,w)}]f (z,w,\eta,t)\dz\dw\de \\
    &\quad + \int_{\O}\dfrac{|w|}{|u|}\phi (y-z)[\cos\widehat{(v,w)} - \cos\widehat{(v,u)}\cos\widehat{(u,w)}]f (z,w,\eta,t)\dz\dw\de.
\end{split}
\end{equation}
Note that if $v,u,$ and $w$ are three vectors lying in the same two dimenstional plane and 
\begin{equation}\label{sec_limit}
    \widehat{(v,u)} = \widehat{(v,w)} + \widehat{(w,u)} < \pi - \d \quad\text{ for some } \d > 0,
\end{equation}
then the followings hold:
\begin{align*}
   \cos\widehat{(u,w)} - \cos\widehat{(v,u)}\cos\widehat{(v,w)} &= \cos \left(\widehat{(v,u)} - \widehat{(v,w)}\right) - \cos\widehat{(v,u)}\cos\widehat{(v,w)}\nonumber\\
   & = \sin\widehat{(v,u)}\sin\widehat{(v,w)} \geq 0,\\
   \cos\widehat{(v,w)} - \cos\widehat{(v,u)}\cos\widehat{(u,w)} &\geq 0,
\end{align*}
\begin{align*}
 \cos\widehat{(u,w)} + \cos\widehat{(v,w)} = \cos\dfrac{\widehat{(v,u)}}{2}\cos\dfrac{\widehat{(u,w)} - \widehat{(v,w)}}{2}\geq \left(\cos\dfrac{\pi - \d}{2}\right)^2.  
\end{align*}
Therefore, if the support of $f$ in $v$ is on a two dimensional plane and \eqref{sec_limit} is satisfied, then by Lemma \ref{v:bdd} , Lemma \ref{V:bdd_below} and \eqref{e:cos}, one has
\begin{align*}
 \ddt \cos \widehat{(v,u)} &\geq c\phi(\cD)\int_{\O}\left(\cos\widehat{(u,w)} + \cos\widehat{(v,w)}\right)\left( 1 - \cos\widehat{(v,u)}\right)f(z,w,\eta,t)\dz\dw\de\nonumber\\
 &\geq c\phi(\cD)\left( 1 - \cos\widehat{(v,u)}\right).
\end{align*}
Equivalently,
\begin{equation}\label{e:angle}
    \ddt \left(1 -\cos \widehat{(v,u)}\right) \leq - c\phi(\cD)\left( 1 - \cos\widehat{(v,u)}\right).
\end{equation}
Now let $\ip$ be a fixed two dimensional plane which contains the $v_n$-axis. Denoting by $v^{\ip}$ the projection of any $v \in \supp f$ onto $\ip$. Projecting the second equation in \eqref{e:kinCSR} onto $\ip$ we have the following equation:
\begin{equation}\label{e:v_proj}
\dot{v}^{\ip} = \int_{\O} \phi(x-z) (\wp-\vp) f(z,w,\eta,t) \dz \dw\de +\s \vp(\th-|v|^p)
\end{equation}
Therefore, we can write the equation for $\cos \widehat{(\vp,\up)}$ as follows:
\begin{equation}\label{e:cos_proj}
  \begin{split}
    \ddt \cos \widehat{(\vp,\up)} &= \int_{\O}\dfrac{|\wp|}{|\vp|}\phi (x-z)[\cos\widehat{(\up,\wp)} - \cos\widehat{(\vp,\up)}\cos\widehat{(\vp,\wp)}]f (z,w,\eta,t)\dz\dw\de \\
    &\quad + \int_{\O}\dfrac{|\wp|}{|\up|}\phi (y-z)[\cos\widehat{(\vp,\wp)} - \cos\widehat{(\vp,\up)}\cos\widehat{(\up,\wp)}]f (z,w,\eta,t)\dz\dw\de.
\end{split}  
\end{equation}
Let us denote $\cG (1,n-1)$ the space of all two dimensional subspaces of $\R^n$ which contain $v_n$-axis. Since $\cG (1,n - 1)$ can be identified with $1$-Grassmannian manifold of $\R^{n-1}$ which is compact, we can define
\begin{equation}
    \g^{2D} = \max\limits_{\substack{ \ip \in\, \cG (1,n-1)\\
    (x,v,\th),(y,u,\zeta) \,\in \,\supp f
    }} \widehat{(\vp,\up)}.
\end{equation}
We note that 
\[
\g^{2D} \leq \pi - \d \quad \text{ for some } \d > 0.
\]
Since the $n$-th coordinate of any $v\in \supp f$ does not change when it is projected onto $\ip$, $|\vp|$ is still bounded above and below by positive constants. Therefore, choosing a maximizing triple $\ip, u, v$ for $\widehat{(\vp,\up)}$, from \eqref{e:cos_proj} we deduce that 
\begin{equation}\label{e:2Dg}
    \ddt(1- \cos\g^{2D}) \leq - c\phi(\cD)((1- \cos\g^{2D}).
\end{equation}
Denoting 
\begin{equation*}
\g = \max\limits_{(x,v,\th),(y,u,\zeta) \,\in\, \supp f} \widehat{(u,v)}. 
\end{equation*}
\begin{claim}\label{g}
We have $\g \leq \g^{2D}$.
\end{claim}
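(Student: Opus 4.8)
The plan is to establish the pointwise refinement that for the particular pair of velocities realizing $\g$ there is a \emph{single} admissible plane on which the projected angle is already at least $\g$; since $\g^{2D}$ is the maximum of the projected angle over all planes in $\cG(1,n-1)$ and all pairs, this gives $\g\le\g^{2D}$ at once. We may assume $\g>0$. First I would note that, the velocity support being compact (Lemmas~\ref{v:bdd}, \ref{V:bdd_below}) and the angle continuous, $\g$ is attained at two nonzero velocities $u,v$, with $\widehat{(v,u)}=\g<\pi$ by sectoriality; in particular the bisector $\hat b=\tv+\tu$ is nonzero and has strictly positive $v_n$-component, since $\tv_n,\tu_n>0$.

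Let $P=\mathrm{span}\{u,v\}$ and let $c\in P$ be the unit in-plane vector orthogonal to $\hat b$ (one may take $c\propto \tu-\tv$, which is orthogonal to $\tu+\tv$). If $e_n\in P$, where $e_n$ is the unit vector of the $v_n$-axis, I simply take $\ip=P\in\cG(1,n-1)$ and the projections are the vectors themselves, so the projected angle equals $\g$. Otherwise set $\ip=\mathrm{span}\{e_n,c\}$, a genuine two-plane containing the $v_n$-axis, hence $\ip\in\cG(1,n-1)$; it is the admissible plane whose trace on $P$ is the line \emph{perpendicular} to the bisector. In the orthonormal frame $\{\hat b,c\}$ of $P$ the two vectors sit symmetrically about $\hat b$, namely $v=|v|(\cos\tfrac{\g}{2}\,\hat b-\sin\tfrac{\g}{2}\,c)$ and $u=|u|(\cos\tfrac{\g}{2}\,\hat b+\sin\tfrac{\g}{2}\,c)$.

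The key step is to show that projecting onto $\ip$ drives $u,v$ apart. Since $c\in\ip$ it is fixed by the projection, and because the projection is self-adjoint and $\hat b\perp c$, the image of $\hat b$ is orthogonal to $c$ and has length $\rho:=|\mathrm{proj}_\ip \hat b|$; here $\rho\le 1$, and $\rho>0$ because $\hat b\perp\ip$ would force $\hat b\perp e_n$, contradicting positivity of the $v_n$-component of $\hat b$. Writing $\hat d$ for the unit image of $\hat b$, the projections read $\vp=|v|(\rho\cos\tfrac\g2\,\hat d-\sin\tfrac\g2\,c)$ and $\up=|u|(\rho\cos\tfrac\g2\,\hat d+\sin\tfrac\g2\,c)$, so each makes angle $\arctan(\rho^{-1}\tan\tfrac\g2)$ with $\hat d$ on opposite sides. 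Hence
\[
\widehat{(\vp,\up)}=2\arctan\!\big(\rho^{-1}\tan\tfrac\g2\big)\ \ge\ 2\arctan\!\big(\tan\tfrac\g2\big)=\g,
\]
using $\rho\le 1$ and monotonicity of $\arctan$, which is exactly what is needed. The one genuine subtlety---and the step I would be most careful about---is the choice of plane: one must align $P\cap\ip$ with the direction \emph{perpendicular} to the bisector so that the projection compresses the bisector direction and spreads $u,v$; the opposite choice (e.g.\ $\ip\supset\hat b$, or the tempting $\ip=\mathrm{span}\{e_n,u-v\}$) compresses the wrong direction and can produce a projected angle strictly below $\g$, so no lossy or ad hoc choice of $\ip$ will do.
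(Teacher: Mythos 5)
Your proof is correct and takes essentially the same approach as the paper: the paper also projects onto $\ip = \mathrm{span}\{e_n, \tu-\tv\}$, which (since $\tu-\tv$ is orthogonal to $\tu+\tv$) is exactly your plane through the $v_n$-axis perpendicular to the bisector, and your warning about the unnormalized difference is precisely why the paper works with $\tu-\tv$ rather than $u-v$. The only difference is the verification that the projected angle dominates $\g$: the paper observes that the chord $\tu-\tv$ is preserved by the projection while the two projected unit vectors have equal norm $|\tu^{\ip}|=|\tv^{\ip}|\le 1$, and concludes by the law of cosines, whereas you compute the projected angle explicitly as $2\arctan\bigl(\rho^{-1}\tan(\g/2)\bigr)\ge\g$ in the bisector frame --- the same mechanism, with slightly more bookkeeping.
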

\begin{proof}[Proof of Claim \ref{g}]
For any $(x,v,\th),(y,u,\zeta) \in \supp f$, consider the two dimensional subspace $\ip = \text{span} \{e_n, \tu - \tv\}$ where $e_n = (0, \ldots, 0, 1)$. We have $\ip \in \cG(1,n-1)$ and $\tu - \tv = \tu^{\ip} - \tv^{\ip}$. By the law of cosines, we get
\begin{equation*}
\begin{split}
2(1-\cos\widehat{(u,v)}) = |\tu - \tv|^2 = |\tu^{\ip} - \tv^{\ip}|^2& = 2|\tu^{\ip} |^2(1-\cos\widehat{(u^{\ip},v^{\ip})})\\
&\leq 2 (1-\cos\widehat{(u^{\ip},v^{\ip})}).
\end{split}
\end{equation*}
It implies that for any $(x,v,\th),(y,u,\zeta) \in \supp f$ there exists $\ip \in \cG(1,n-1)$ such that $\widehat{(u,v)} \leq \widehat{(u^{\ip},v^{\ip})}$. Therefore, the claim is followed.
\end{proof}

\begin{remark}\label{cosg-g2D}
Claim \ref{g} and the inequality \eqref{e:2Dg} imply that if $\cD(t) \leq D < \infty$ then
\[
\begin{split}
   1 - \cos \g \leq 1 - \cos \g^{2D} \lesssim e^{-c\phi({D}) t}.
\end{split}
\]
\end{remark}
Now setting
\begin{equation*}
    \cR = \max\limits_{(x,v,\th), (y,u,\zeta)\in \supp f}\dfrac{|v|^2}{|u|^2}.
\end{equation*}
Suppose that $(x,v,\th), (y,u,\zeta)$ maximize $\cR$  at time $t$, we have
\begin{align}\label{R}
        \ddt \cR &= \dfrac{2}{|u|^2} \left[ \int_{\O} \phi(x-z) (v\cdot w-|v|^2) f(z,w,\eta,t) \dz \dw \de +\s |v|^2(\th-|v|^p)\right]\nonumber\\
        &\quad - \dfrac{2|v|^2}{|u|^4}\left[ \int_{\O} \phi(y-z) (u\cdot w-|u|^2) f(z,w,\eta,t) \dz \dw \de +\s |u|^2(\zeta-|u|^p)\right]\nonumber\\
        & = \dfrac{2}{|u|^2} \int_{\O} \phi(x-z) (v\cdot w-|v|^2) f(z,w,\eta,t) \dz \dw\de\\
        & \quad + \dfrac{2|v|^2}{|u|^4} \int_{\R^{2d}} \phi(y-z) (|u|^2 -u\cdot w) f(z,w,\eta,t) \dz \dw\de + 2\s\cR (\th - \zeta + |u|^p - |v|^p)\nonumber
    \end{align}
Since $u,v$ maximize $\cR$,  we have $v\cdot w - |v|^2 \leq |v|(|w| -|v|) \leq 0$ for all $w \in \supp f$. Hence, the first term on the right hand side of \eqref{R} is nonpositve. For the second term, we have
\[
|u|^2 -u\cdot w = |u|^2 - |u||w|\cos \widehat{(u,w)} \lesssim 1 -\cos\g.
\]
Note that $\cR$ is bounded from above and below, hence,
\[
2\s\cR (\th -\zeta + |u|^p - |v|^p) =2\s\cR (\th -\zeta )+ \dfrac{2\s\cR}{|u|^p}(1-\cR^{p/2})\lesssim \cQ + (1 - \cR).
\]
Therefore, there exist positive constants $c_1, c_2, c_3$ such that 
\begin{equation}\label{e:R}
    \ddt (\cR -1) \leq - c_1(\cR-1) + c_2(1 - \cos \g)+c_3\cQ.
\end{equation}

Firstly, we see that the flock diameter grows at most linearly in time,
\begin{equation}\label{diam_est}
\cD(t) \lesssim t
\end{equation}
since
\begin{equation}\label{e:diam}
  \ddt \cD(t) \leq \cA(t)
\end{equation}
and $|v|$ is bounded for all $(x,v,\th) \in \supp f$.
It is not hard to see the relation
\begin{equation}\label{A2}
    \cA^2 \lesssim (\cR -1) + (1-\cos\g).
\end{equation}
Thus, to prove an exponential alignment it suffices to show that both $(\cR -1)$ and $(1-\cos \g)$ decays exponentially fast.

\medskip
We now consider two cases for $\b$:

Case I:  $\b < 1$. Our assumption on the kernel and \eqref{diam_est} imply that 
\begin{equation}\label{phi}
\phi(\cD) \gtrsim \dfrac{1}{(1+t^2)^{\b/2}}. 
\end{equation}
Plugging it into \eqref{e:2Dg} and applying the Gr\"onwall's Lemma we get
\begin{equation}\label{e:cosg}
    1 - \cos\g \leq 1 - \cos \g^{2D} \lesssim e^{-c\langle t\rangle^{1-\b}}.
\end{equation}
Plugging \eqref{phi} into \eqref{theta} and solving for $\cQ$ we also have
\begin{equation}
    \cQ \lesssim e^{-c\langle t\rangle^{1-\b}}.
\end{equation}
Combining these inequalities with \eqref{e:R} and solving for $\cR -1$ we obtain 
\begin{equation}\label{e:R-1}
    \cR -1 \lesssim e^{-c\langle t\rangle^{1-\b}}.
\end{equation}
From \eqref{e:diam}, \eqref{A2}, \eqref{e:cosg} and \eqref{e:R-1}, we have
\[
\ddt \cD \lesssim e^{-c\langle t\rangle^{(1-\b)/2}}.
\]
Solving this ODI gives 
\begin{equation}\label{finite_diam}
\cD(t) \leq D< \infty.
\end{equation}
Thus, \eqref{theta} implies that 
\[
\cQ(t) \leq \cQ(0) e^{-t\phi(D)}.
\]
Hence, $\th(t)$ aligns to $\th_\infty$ exponentially fast for all $(x,v,\th) \in \supp f$.
Due to finite flock diameter \eqref{finite_diam} and Remark \ref{cosg-g2D}, we have
\[
1 - \cos \g \lesssim e^{-c\phi({D}) t}.
\]
Putting the estimates for $\cQ$ and $(1-\cos \g)$ into \eqref{e:R} and solving for $\cR -1$ where we use the Gr\"onwall's Lemma, we obtain the exponential decay for $\cR -1$ as well. Therefore, we arrive at an alignment with an exponential rate.

Denoting by $E$ any quantity which decays exponentially fast. So far we have $|\th -\th_\infty| = E(t), |v - u| = E(t)$ for any $\th, v,u \in \supp f$. By \eqref{vm_bdd} and Lemma \ref{v:bdd}, $|v_{\pm}|(t)$ are bounded, hence, the following equations hold for $|v_{\pm}|^p(t) -\th_\infty$:
\begin{equation*}
    \ddt (|v_{\pm}|^p - \th_\infty) = (\s p|v_{\pm}|^p(\th_\infty- |v_{\pm}|^p) + E)  \quad \sim  \quad (- (|v_{\pm}|^p - \th_\infty) + E).
\end{equation*}
It follows that $|v_{\pm}|^p(t)$ converges to $\th_\infty$ exponentially fast. Therefore, from the characteristic equation for $v \in \supp f$ in \eqref{e:kinCSR} we deduce that 
\[
\ddt v = E, \quad \forall v_0 \in \supp f_0.
\]
The existence of $v_\infty$ is followed then.

Case II: $\b = 1$. In this case, we have $\phi(\cD) \gtrsim \dfrac{1}{\sqrt{1+t^2}}$, hence,
\begin{equation*}
     1 - \cos\g \leq 1 - \cos \g^{2D} \lesssim \langle t\rangle^{-\a}, \quad\text{ and } 
\end{equation*}
\begin{equation*}
   \cQ \lesssim  \langle t\rangle^{-\a}, \text{ for some } \a >0.
\end{equation*}
Therefore, 
\begin{equation*}
    \ddt (\cR -1) \lesssim - (\cR-1) + \langle t\rangle^{-\a} .
\end{equation*}
Solving this ODI we yield
\begin{equation*}
    \cR - 1 \lesssim \langle t\rangle^{-\a}.
\end{equation*}
Here we used the fact that $e^{-c t} \ast \langle t\rangle^{-\a} \sim\langle t\rangle^{-\a}$. It implies that 
\begin{equation*}
    \cA \lesssim \langle t\rangle^{-\a/2},
\end{equation*}
and hence, 
\begin{equation*}
    \cD\lesssim \langle t\rangle^{1-\a/2}.
\end{equation*}
Thus,
\begin{equation*}
    \phi (\cD) \gtrsim \phi( \langle t\rangle^{1-\a/2})\gtrsim \dfrac{1}{(1+t^2)^{\tilde{\beta}/2}} \text{ for some } \tilde{\beta} < 1.
\end{equation*}
Now we can argue exactly as in the case $\b < 1$ replacing $\b$ by $\tilde{\beta}$ to reach the conclusions of the theorem.
\end{proof}

 \subsection{Propagation of Chaos}
 Using \prop{p:flockRkin} as a key ingredient we now prove our main result for the Rayleigh-forced system.  So, let us we consider the full Liouville equation for a probability density $f^N$ on $\O^{N}$: 
\begin{equation}\label{e:LER}
\p_t f^N + \sum_{i=1}^N v_i \cdot \n_{x_i} f^N + \sum_{i=1}^N \n_{v_i} \cdot(f^N F_i^N)+ \sum_{i=1}^N \n_{v_i} \cdot(f^N R_i^N)+ \sum_{i=1}^N \n_{\th_i} \cdot(f^N \Th_i^N) = 0,
\end{equation}
subject to the initial condition 
\begin{equation}\label{e:LERic}
f^N(0) = f_0^{\otimes N}, 
\end{equation}
where $ f_0 : \O \to \R_+$ and for $(X,V,\Theta) = (x_1,\ldots,x_N, v_1,\ldots,v_N,\th_1,\ldots,\th_N)$,
\[
\begin{split}
    F_i^N(X,V,\Theta) &= \frac1N \sum_{k=1}^N \phi(x_i - x_k)(v_k - v_i) ,\\
    \Th_i^N(X,V,\Theta) &= \frac1N \sum_{k=1}^N \phi(x_i - x_k)(\th_k - \th_i),\\
    R_i^N (X,V,\Theta) &= \s v_i(\th_i-|v_i|^p).
\end{split}
\]

\begin{theorem}\label{t:mainR}
Suppose the kernel $\phi$ satisfies \eqref{e:kernelpower}. Let $f_0\in C^1_0(\O)$ be a sectorial initial distribution, and $f^N, f$ be the solutions to  to the system \eqref{e:LER} and \eqref{e:VER}, respectively. Then there exists a constant $C$ which depends only on $\diam(\supp f_0)$ and $\phi$ such that for all $N\in \N$, $k < N$, and $t\geq 0$ one has
\begin{equation}\label{e:chaos2}
 W_2(f^{(k)}_t,f^{\otimes k}_t) \leq C k^{1/2} \min\left\{ 1, \frac{t^2}{\sqrt{N}} \right\}.
\end{equation}
\end{theorem}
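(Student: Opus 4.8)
The plan is to reproduce the Lagrangian energy method of Section~\ref{sec:classicalCS}, now on the extended configuration space $\O^N$ and carrying the additional characteristic coordinate $\th$. Write $\Phi^N_t$ for the flow-map of \eqref{e:LER} and $\barPhi^{\otimes N}_t$ for the $N$-fold product of the kinetic flow \eqref{e:kinCSR}, so that $f^N_t = \Phi^N_t\sharp f_0^{\otimes N}$ and $f^{\otimes N}_t = \barPhi^{\otimes N}_t\sharp f_0^{\otimes N}$. The coupling/Wasserstein reduction at the opening of Section~\ref{sec:classicalCS} applies verbatim (now the $k$ selected coordinates are triples $(x,v,\th)$), and shows that \eqref{e:chaos2} follows once we establish
\[
\cP + \cK + \cT \leq C\min\{N, t^4\},
\]
where $\cP,\cK$ are as before and $\cT = \tfrac12\int_{\O^N}|\Theta_t - \widebar{\Theta}_t|^2\, f_0^{\otimes N}$ is the new characteristic energy in the $\th$-variable. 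The global bound by $N$ is immediate from the uniformly bounded supports guaranteed by \prop{p:flockR} and \prop{p:flockRkin}, so the work is to produce the polynomial bound $\cP+\cK+\cT\lesssim t^4$.

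Next I would derive the three coupled differential inequalities. The position estimate is unchanged, $\ddt\cP^{1/2}\leq\cK^{1/2}$. Since the $\th$-dynamics $\dot\th_i=\tfrac{\k}{N}\sum_j\phi(x_i-x_j)(\th_j-\th_i)$ has exactly the Cucker--Smale alignment structure, the computation for $\cT$ mirrors the kinetic-energy computation of the forceless case word for word, with $\th$ in place of $v$: the exponential decay of $\cQ(t)$ from \prop{p:flockRkin} (and of $\max_{i,j}|\th_i-\th_j|$ from \prop{p:flockR}) supplies the stabilizing factor, yielding $\ddt\cT\leq ce^{-\d t}(\cT^{1/2}+\cT^{1/2}\cP^{1/2})$. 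For $\cK$, the communication part of \eqref{e:CSR}--\eqref{e:VER} is identical to that of \eqref{e:CS}--\eqref{e:VE}, so the three terms $A,B,C$ from Section~\ref{sec:classicalCS} reappear and are controlled in exactly the same way (again using that $\cA(t)=E(t)$ and $\diam\supp f_t\leq D$). The only genuinely new contribution is the self-propulsion term
\[
D = \int_{\O^N}\sum_{i=1}^N(v_i-\barv_i)\cdot\big[\s v_i(\th_i-|v_i|^p)-\s\barv_i(\barth_i-|\barv_i|^p)\big]\, f_0^{\otimes N}.
\]

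The heart of the argument is the estimation of $D$, and this is where I expect the main obstacle. I would decompose the bracket as $\s\th_i(v_i-\barv_i)+\s(\th_i-\barth_i)\barv_i-\s(v_i|v_i|^p-\barv_i|\barv_i|^p)$. The last piece, paired with $v_i-\barv_i$, is nonpositive by monotonicity of $v\mapsto v|v|^p$ (the gradient of the convex function $\tfrac{1}{p+2}|v|^{p+2}$); combining it with the first piece via the fundamental theorem of calculus gives $\s\int_0^1\big(\th_i-|v_{i,s}|^p\big)|v_i-\barv_i|^2\,\ds$ minus a nonnegative term, where $v_{i,s}$ interpolates between $\barv_i$ and $v_i$. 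The middle piece is bounded by $C|v_i-\barv_i|\,|\th_i-\barth_i|$ and contributes $C\cK^{1/2}\cT^{1/2}$ after Cauchy--Schwarz. The delicate point is to show that the coefficient $\th_i-|v_{i,s}|^p$ is small: by \prop{p:flockR} and \prop{p:flockRkin} both flows approach their Rayleigh equilibrium speed, $\th_i-|v_i|^p=E(t)$ and $\barth_i-|\barv_i|^p=E(t)$, while the velocities stay in a range $[c_0,C]$ bounded away from $0$ and $\infty$ (\lem{v:bdd}, \lem{V:bdd_below} and their discrete analogs). Because the angular spread $\g$ and the speed deviations both decay (Remark~\ref{cosg-g2D}), the interpolant $v_{i,s}$ cannot dip far from the equilibrium sphere, so $\th_i-|v_{i,s}|^p$ is controlled by an exponentially decaying quantity plus an $O(N^{-1/2})$ mismatch between the discrete and kinetic flocking limits. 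This yields $D\leq\big(E(t)+CN^{-1/2}\big)\cK + C\cK^{1/2}\cT^{1/2}$, in which the coefficient of $\cK$ is integrable in time up to the $O(N^{-1/2})$ correction, harmless on the relevant horizon $t\lesssim N^{1/4}$.

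Finally I would close the estimate with a three-variable Gr\"onwall argument in the spirit of the Claim in Section~\ref{sec:classicalCS}. Writing $x=1+\cP^{1/2}$, $y=\cK^{1/2}$, $z=\cT^{1/2}$, the inequalities above take the schematic form $\dot x\leq y$, $\dot z\leq ce^{-\d t}x$, and $\dot y\leq ce^{-\d t}x + Cz + (E(t)+CN^{-1/2})\,y$. The $z$-equation is identical to the forceless $y$-equation, so $z$ stays bounded once $x$ is polynomial; the decisive difference from the forceless case is the non-decaying coupling term $Cz$ in the $\dot y$ inequality, which forces $y=\cK^{1/2}\lesssim t$ rather than $\cK^{1/2}\lesssim 1$. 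Feeding this into $\dot x\leq y$ gives $x=\cP^{1/2}\lesssim t^2$, and the bootstrap closes consistently, producing $\cK\lesssim t^2$, $\cT\lesssim 1$, and $\cP\lesssim t^4$. Combining with the global $N$-bound yields $\cP+\cK+\cT\lesssim\min\{N,t^4\}$, and the reduction then gives $\cW_2(f^{(k)}_t,f^{\otimes k}_t)\leq Ck^{1/2}\min\{1,t^2/\sqrt N\}$. Thus the extra power of $t$ relative to \thm{t:main} is precisely the footprint of the self-propulsion force, entering through the undamped $\cK$--$\cT$ coupling generated by $D$.
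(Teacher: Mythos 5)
Your overall architecture coincides with the paper's proof: the same Wasserstein reduction to $\cP+\cK+\cT\leq C\min\{N,t^4\}$ (the paper calls your $\cT$ by $\cC$), the same treatment of the alignment terms in $\cK$ and of the $\th$-energy, and the same closing three-variable ODI with $x\lesssim 1+t^2$, $y\lesssim t$, $z\lesssim\min\{1,t\}$. The gap is exactly where you flagged the "delicate point": the self-propulsion term $D$ (the paper's $\cS_2$). After discarding the nonnegative Jacobian contribution, you need the coefficient $\th_i-|v_{i,s}|^p$ to be of size $E(t)+O(N^{-1/2})$, and this is false. Indeed, for $v_{i,s}=\barv_i+s(v_i-\barv_i)$ one has the exact identity
\begin{equation*}
|v_{i,s}|^2=(1-s)|\barv_i|^2+s|v_i|^2-s(1-s)|v_i-\barv_i|^2 ,
\end{equation*}
so even in the most favorable situation where both flows sit exactly at their Rayleigh equilibria with the same parameter, $\th_i=|v_i|^p=\barth_i=|\barv_i|^p=r^p$, the chord dips inside the sphere and
\begin{equation*}
\th_i-|v_{i,s}|^p \;=\; r^p-\bigl(r^2-s(1-s)|v_i-\barv_i|^2\bigr)^{p/2}\;\sim\; \tfrac{p}{2}\,r^{p-2}s(1-s)\,|v_i-\barv_i|^2 .
\end{equation*}
Multiplied by $|v_i-\barv_i|^2$ this produces $\sum_i|v_i-\barv_i|^4$, which can only be bounded by $C\cK$ with a non-decaying constant; that puts a non-integrable coefficient on $\cK$ in the differential inequality and gives exponential, not polynomial, growth. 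Your proposed rescue — an "$O(N^{-1/2})$ mismatch between the discrete and kinetic flocking limits" — is circular: the law-of-large-numbers fluctuation $O(N^{-1/2})$ applies to conserved averages such as $\frac1N\sum_j\th_j(0)$, not to the dynamically determined limit velocities $v_\infty$ of \prop{p:flockR} and \prop{p:flockRkin}; closeness of those limits for the two flows is essentially the propagation-of-chaos statement you are in the middle of proving.

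The paper avoids mixing the two flows inside the force by using exact symmetrization identities,
\begin{align*}
(v_i-\barv_i)\cdot(\th_iv_i-\barth_i\barv_i) &= \tfrac12(\th_i+\barth_i)|v_i-\barv_i|^2+\tfrac12(\th_i-\barth_i)\bigl(|v_i|^2-|\barv_i|^2\bigr),\\
(v_i-\barv_i)\cdot\bigl(v_i|v_i|^p-\barv_i|\barv_i|^p\bigr) &= \tfrac12\bigl(|v_i|^p+|\barv_i|^p\bigr)|v_i-\barv_i|^2+\tfrac12\bigl(|v_i|^2-|\barv_i|^2\bigr)\bigl(|v_i|^p-|\barv_i|^p\bigr),
\end{align*}
discarding the last cross term (it is nonnegative) and subtracting. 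The coefficient of $|v_i-\barv_i|^2$ is then $\tfrac{\s}{2}\bigl[(\th_i-|v_i|^p)+(\barth_i-|\barv_i|^p)\bigr]$: only \emph{same-flow} Rayleigh deviations appear, each $\leq ce^{-\d t}$ by \prop{p:flockR} and \prop{p:flockRkin}, yielding $\cS_2\leq c\bigl(e^{-\d t}\cK+\cK^{1/2}\cC^{1/2}\bigr)$ with no $N$-dependent correction and no restriction on the time horizon. (Incidentally, if you keep rather than discard the Jacobian term in your FTC computation, the quartic terms cancel to leading order — the two computations are algebraically the same thing — but the smallness only becomes visible after regrouping as above.) A smaller point: your final bootstrap "$z$ bounded once $x$ is polynomial, hence $y\lesssim t$, hence $x\lesssim t^2$" is circular as stated; as in the paper's Claim, one first needs the a priori bound $x\lesssim e^{\d t/2}$, obtained from the Lyapunov functional $\e x^2+y^2+\t z^2$ and Gr\"onwall with suitably chosen $\e,\t$, before feeding back through the system.
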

\begin{proof}

We introduce a similar notation for the flow-maps. Denote by 
\[
\Phi^N_t = (x_1(t),v_1(t),\th_1(t)\ldots,x_N(t),v_N(t),\th_N(t)): \O^N \to \O^{N}
\]
the flow-map of the discrete system \eqref{e:CSR} which is also the characteristic flow of \eqref{e:LER}. Then, as before,  $f^N $ is the push forward of $f_0^{\otimes N}$ under  $\Phi_t^N$, 
\[
f^N = \Phi_t^N \sharp f_0^{\otimes N}.
\]
Let also
\[
\barPhi_t = (\barx(t),\barv(t),\barth(t)): \O \to \O
\]
be the characteristic map of \eqref{e:VER}, which consists of solutions to  \eqref{e:kinCSR}. The direct product of $N$ copies will be denoted $\barPhi^{\otimes N}_t$.  Then we have 
\begin{equation}
    f = \barPhi_t \sharp f_0,\quad f^{\otimes N} =  \barPhi_t^{\otimes N} \sharp f_0^{\otimes N}.
\end{equation}

By the same logic as before the theorem reduces to establishing the bound
\begin{equation}\label{e:wassauxR}
 \int_{R^{2nN}}  | \Phi_t^N(X_0,V_0) - \barPhi_t^{\otimes N}(X_0,V_0)|^2 \,f_0^{\otimes N}(X_0,V_0) \dX_0 \dV_0 \leq  C \min\{ N, t^4\}.
\end{equation}
We split the integrand into three components:
\begin{equation}\label{}
\begin{split}
\cP &= \frac12 \int_{\O^{N}}  | X_t(X_0,V_0,\Th_0) - \barX_t(X_0,V_0,\Th_0) |^2 \, f_0^{\otimes N}(X_0,V_0,\Th_0) \dX_0 \dV_0\dTh_0,\\
\cK &= \frac12 \int_{\O^{N}}  | V_t(X_0,V_0,\Th_0) - \barV_t(X_0,V_0,\Th_0) |^2\, f_0^{\otimes N}(X_0,V_0,\Th_0) \dX_0 \dV_0\dTh_0,\\
\cC &= \frac12 \int_{\O^{N}}  | \Th_t(X_0,V_0,\Th_0) - \barTh_t(X_0,V_0,\Th_0)  |^2\, f_0^{\otimes N}(X_0,V_0,\Th_0) \dX_0 \dV_0\dTh_0.
\end{split}
\end{equation}
For the potential energy we will use the same inequality as before, \eqref{e:PK}. For $\cK$, we obtain
\[
\ddt \cK  = \cS_1 + \cS_2,
\]
where $\cS_1$ is the exact same alignment term that we handled before, but now with the use of \prop{p:flockR} and \prop{p:flockRkin},
\begin{equation}\label{s1}
    \cS_1 \leq c e^{-\d t}\cK^{1/2} (1+\cP^{1/2}).
\end{equation}
And $\cS_2$ is given by
\[
\cS_2 =  \int_{\O^{N}}\sum_{i=1}^N (v_i - \barv_i) \cdot \left(\s v_i(\th_i-|v_i|^p) - \s \barv_i(\barth_i-|\barv_i|^p)\right)f_0^{\otimes N}(X_0,V_0,\Th_0) \dX_0 \dV_0\dTh_0 .
\]

Let us write  $\cS_2$ as follows
\[
\begin{split}
\cS_2 &=  \s\int_{\O^N}\sum_{i=1}^N (v_i - \barv_i) \cdot \left( \th_i v_i  -\barth_i \barv_i\right)f_0^{\otimes N}(X_0,V_0,\Th_0) \dX_0 \dV_0\dTh_0\\
&\quad - \s\int_{\O^N}\sum_{i=1}^N (v_i - \barv_i) \cdot \left( v_i|v_i|^p) -  \barv_i|\barv_i|^p\right)f_0^{\otimes N}(X_0,V_0,\Th_0) \dX_0 \dV_0\dTh_0\\
& := J_1 - J_2.
\end{split}
\]
Since
\[
\begin{split}
 (v_i - \barv_i) \cdot \left( \th_i v_i  -\barth_i \barv_i\right)  = \dfrac{1}{2}(\th_i + \barth_i) |v_i - \barv_i|^2 + \dfrac{1}{2}(v_i- \barv_i)\cdot [(\th_i - \barth_i) (  v_i + \barv_i)],   
\end{split}
\]
one has
\[
\begin{split}
    J_1 = \dfrac{\s}{2}\int_{\O^N}\sum_{i=1}^N \left((\th_i + \barth_i) |v_i - \barv_i|^2 + (v_i-\barv_i)\cdot [(\th_i - \barth_i) (v_i + \barv_i)]\right) f_0^{\otimes N}(X_0,V_0,\Th_0) \dX_0 \dV_0\dTh_0.
\end{split}
\]
For $J_2$, since
\[
\begin{split}
      (v_i - \barv_i) \cdot \left( v_i|v_i|^p -  \barv_i|\barv_i|^p\right) 
      & = \dfrac{1}{2}( |v_i|^p+|\barv_i|^p ) |v_i - \barv_i|^2 +  \dfrac{1}{2}(|v_i|^2 - |\barv_i|^2)(|v_i|^p - |\barv_i|^p),
\end{split}
\]
and 
\[
\dfrac{1}{2}(|v_i|^2 - |\barv_i|^2)(|v_i|^p - |\barv_i|^p) \geq 0,
\]
we get
\[
-J_2 \leq - \dfrac{\s}{2}\int_{\O^N}\sum_{i=1}^N  (|\barv_i|^p + |v_i|^p) |v_i - \barv_i|^2f_0^{\otimes N}(X_0,V_0,\Th_0) \dX_0 \dV_0\dTh_0.
\]
Therefore,
\begin{align}
    \cS_2 = J_1 - J_2& \leq  \dfrac{\s}{2}\int_{\O^N}\sum_{i=1}^N  (\th_i - |v_i|^p + \barth_i -|\barv_i|^p) |v_i - \barv_i|^2f_0^{\otimes N}(X_0,V_0,\Th_0) \dX_0 \dV_0\dTh_0\nonumber\\
    & \quad + \dfrac{\s}{2}\int_{\O^N}\sum_{i=1}^N (v_i-\barv_i)\cdot (\th_i - \barth_i) (\barv_i + v_i)f_0^{\otimes N}(X_0,V_0,\Th_0) \dX_0 \dV_0\dTh_0\label{s2_ineq}.
\end{align}
Because $|\th_i - |v_i|^p | \leq ce^{-\d t}$ and $|\barth_i -|\barv_i|^p| \leq ce^{-\d t}$, the first integral on the right hand side of \eqref{s2_ineq} is less than or equal to $ce^{-\d t} \cK$. Then, we apply the H\"older's inequality and the boundedness of $|\barv_i|$ and $|v_i|$ to the second integral to obtain
\begin{align}\label{s2}
    \cS_2 \leq c( e^{-\d t}\cK + \cK^{1/2}\cH^{1/2}).
\end{align}
 Combining \eqref{s1} and \eqref{s2} we get
\begin{equation}\label{e:K}
\ddt \cK \leq c e^{-\d t}\cK^{1/2}  ( \cK^{1/2} + 1 + \cP^{1/2}) + \cK^{1/2}\cH^{1/2}.
\end{equation}

Let us now turn to the characteristic parameters term  $\cC$:
\[
\begin{split}
\ddt \cC & =  \int_{\O^{N}}\sum_{i=1}^N (\th_i - \barth_i) \cdot \left( \frac1N \sum_{k=1}^N \phi(x_i - x_k)(\th_k - \th_i) - \int_{\O}\phi(\barx_i-y) (\eta-\barth_i) f(y,w,\eta,t) \dy \dw\de \right) \\
& \hspace{3in} \times f_0^{\otimes N} \dX_0 \dV_0\dTh_0 \\
&: = I_1 + I_2+ I_3,
\end{split}
\]
where 
\[
\begin{split}
    I_1& = \int_{\O^{N}}\sum_{i=1}^N (\th_i - \barth_i) \cdot \frac1N \sum_{k=1}^N [\phi(x_i - x_k)- \phi(\barx_i - \barx_k)](\th_k - \th_i) \, f_0^{\otimes N} \dX_0 \dV_0\dTh_0,\\
I_2& = \int_{\O^{N}}\sum_{i=1}^N (\th_i - \barth_i) \cdot \frac1N \sum_{k=1}^N \phi(\barx_i - \barx_k)[(\th_k-\barth_k) - (\th_i - \barth_i)] \, f_0^{\otimes N} \dX_0 \dV_0\dTh_0,\\
I_3 & = \int_{\O^{N}}\sum_{i=1}^N (\th_i - \barth_i) \cdot \left(\frac1N \sum_{k=1}^N \phi(\barx_i - \barx_k)(\barth_k - \barth_i) - \int_{\O}\phi(\barx_i-y) (\eta-\barth_i) f(y,w,t) \dy \dw \de \right) \\
&\hspace{3in} \times f_0^{\otimes N} \dX_0 \dV_0 \dTh_0.
\end{split}
\]
We have $I_2 \leq 0$ because
\[
\begin{split}
    I_2 &= \int_{\O^{N}} \frac1N \sum_{i,k=1}^N \phi(\barx_i - \barx_k)[(\th_i - \barth_i) \cdot(\th_k-\barth_k) - |\th_i - \barth_i|^2] \, f_0^{\otimes N} \dX_0 \dV_0\dTh_0\\
    & = - \int_{\O^{N}} \dfrac{1}{2N} \sum_{i,k=1}^N \phi(\barx_i - \barx_k)|(\th_i-\barth_i) - (\th_k - \barth_k)|^2 \, f_0^{\otimes N} \dX_0 \dV_0\dTh_0.
\end{split}
\]
For $I_1$, we obtain, using \prop{p:flockR},
\[
\begin{split}
    |I_1|^2 &\leq 2\cC \int_{\O^{N}} \sum_{i=1}^N\left|\frac1N \sum_{k=1}^N [\phi(x_i - x_k)- \phi(\barx_i - \barx_k)](\th_k - \th_i) \right|^2\,f_0^{\otimes N} \dX_0 \dV_0 \dTh_0\\
    &\leq 2|\n\phi|_{\infty}^2\cC \int_{\O^{N}} \sum_{i=1}^N\left( \frac1N \sum_{k=1}^N |(x_i - x_k)  - (\barx_i - \barx_k)||\th_k - \th_i|\right)^2\,f_0^{\otimes N} \dX_0 \dV_0 \dTh_0\\
    & \leq ce^{-2\d t} \cC \int_{\O^{N}} \sum_{i=1}^N\left( \frac1N \sum_{k=1}^N (|x_i - \barx_i|  + |x_k - \barx_k|)\right)^2\,f_0^{\otimes N} \dX_0 \dV_0 \dTh_0\\
    &\leq ce^{-2\d t} \cC \int_{\O^{N}} \sum_{i=1}^N\left( |x_i - \barx_i|^2  + \frac1N \sum_{k=1}^N|x_k - \barx_k|^2\right)\,f_0^{\otimes N}\dX_0 \dV_0 \dTh_0 \\
    &\quad = ce^{-2\d t} \cC \cP.
\end{split}
\]
Thus, 
\begin{equation} 
    |I_1| \leq c e^{-\d t} \cC^{1/2}\cP^{1/2}.
\end{equation}
For $I_3$, we have 
\[
\begin{split}
|I_3|^2 &\leq 2\cC \int_{\O^{N}}\sum_{i=1}^N  \left|\frac1N \sum_{k=1}^N \phi(\barx_i - \barx_k)(\barth_k - \barth_i) - \int_{\O}\phi(\barx_i-y) (\eta-\barth_i) f(y,w,\eta,t) \dy \dw\de \right|^2 \\
&\hspace{3in} \times f_0^{\otimes N}(X_0,V_0,\Th_0) \dX_0 \dV_0 \dTh_0\\
&\quad =  2\cC \int_{\O^{N}}\sum_{i=1}^N  \left|\frac1N \sum_{k=1}^N \phi(\barx_i - \barx_k)(\barth_k - \barth_i) - \int_{\O}\phi(\barx_i-y) (\eta-\barth_i) f(y,w,\eta,t) \dy \dw\de \right|^2 \\
&\hspace{3in} \times f^{\otimes N}(\barX,\barV,\barTh,t) \dbarX\dbarV\dbarTh  \\
&\quad = 2\cC N\int_{\O^{N}}  \left|\frac1N \sum_{k=1}^N \phi(\barx_1 - \barx_k)(\barth_k - \barth_1) - \int_{\O}\phi(\barx_1-y) (\eta-\barth_1) f(y,w,\eta,t) \dy \dw\de \right|^2 \\
&\hspace{3in} \times f^{\otimes N}(\barX,\barV,\barTh,t) \dbarX\dbarV\dbarTh\\
&\quad\leq  2\cC N \frac4N \sup\limits_{(\barx,\barv,\barth),(\barx',\barv',\barth') \in \supp f_t} |\phi(\barx-\barx')(\barth - \barth')|^2 \leq c \cC e^{-2\d t}.
\end{split}
\]
Here in the penultimate step we used again \cite[Lemma 3.3]{NP2021-orig}.
Therefore, 
\begin{equation}
 |I_3|\leq ce^{-\d t} \cC^{1/2}.   
\end{equation}

Combining  the three estimates for $I_1, I_2, I_3$, we obtain
\begin{equation}\label{e:H}
\ddt \cC \leq ce^{-\d t}(1+\cP^{1/2}) \cC^{1/2}.
\end{equation}
Setting $x = 1+ \cP^{1/2}$, $y = \cK^{1/2}, z = \cC^{1/2}$. By \eqref{e:PK}, \eqref{e:K} and \eqref{e:H} we obtain the system of  ODIs:
\begin{equation}\label{e:xyz_sys}
\left\{\begin{split}
\dot{x} &\leq y,\quad x_0 = 1\\
\dot{y}& \leq ce^{-\d t} (x+ y) + cz,\quad y_0 = 0\\
\dot{z} &\leq c e^{-\d t} x, \quad z_0 = 0.
\end{split}\right.
\end{equation}
\begin{claim}\label{xyz_est}
For any nonnegative solution $(x,y,z)$ to \eqref{e:xyz_sys}, there exists a constant $C$ depending on $c, \d$ such that 
\begin{equation}
    x\leq 1 +C t^2, \quad y \leq C t, \quad z \leq C \min \{1, t\}.
\end{equation}
\end{claim}
\begin{proof}[Proof of the Claim \ref{xyz_est}]
Fix $\e, \tau > 0$ to be chosen later. We have
\[
\left\{\begin{split}
&\ddt (\e x^2) \leq 2\e x y \leq \sqrt{\e}(\e x^2 + y^2)\\
&\ddt y^2 \leq ce^{-\d t}(2xy + 2y^2) + 2c y z \leq ce^{-\d t} \left[\dfrac{1}{\sqrt{\e}}(\e x^2 + y^2) + 2y^2\right] + \dfrac{c}{\sqrt{\t}}(y^2 + \t z^2)\\
&\ddt (\t z^2) \leq 2\t ce^{-\d t}xz \leq \dfrac{c \sqrt{\t}e^{-\d t}}{\sqrt{\e}} (\e x^2 +\t z^2)
\end{split}\right.
\]
It implies that
\begin{align*}
    \ddt(\e x^2 + y^2 + \t z^2) \leq c(\t, \e) e^{-\d t} ( \e x^2 + y^2 + \t z^2) + \left(\sqrt{\e} + \dfrac{c}{\sqrt{\t}}\right)(\e x^2 + y^2 + \t z^2).
\end{align*}
Applying the Gr\"onwall's lemma we get
\[
 \e x^2 + y^2 + \t z^2 \leq \e \exp\left( \left(\sqrt{\e} + \dfrac{c}{\sqrt{\t}}\right) t + \dfrac{c(\e,\t)}{\d}(1- e^{-\d t})\right)\leq \e \exp\left( \left(\sqrt{\e} + \dfrac{c}{\sqrt{\t}}\right) t + \dfrac{c(\e,\t)}{\d}\right).
\]
Now choosing $\e = \d ^2/4, \t = 4c^2/\d^2$, we obtain
\begin{equation*}\label{x}
   x\lesssim e^{\d t/2}. 
\end{equation*}
Plugging it into the third equation in \eqref{e:xyz_sys} and solving for $z$ we have
\begin{equation*}\label{z}
  z \leq c\int_0^t e^{-\d s/2}\ds \leq C\min \{ 1, t\}.  
\end{equation*}
Substituting $x,z$ into the second equation in  \eqref{e:xyz_sys} we have
\begin{align*}
    \ddt y \leq ce^{-\d t} y + ce^{-\d t/2} + C\min (1,t).
\end{align*}
It implies that 
\begin{equation*}
    y \leq C t.
\end{equation*}
Hence, by the first equation in \eqref{e:xyz_sys} we get
\begin{equation*}
    x \leq 1 + Ct^2.
\end{equation*}
\end{proof}
Claim \ref{xyz_est} follows that
\begin{equation*}
    \cP \leq Ct^4, \quad \cK \leq Ct^2, \quad \cC \leq C\min\{1, t^2\}.
\end{equation*}
On the other hand, in view of the global estimates on the support of the flock,  $\cP \leq C N$. Due to the alignment we also have $\cK \leq C N$. Therefore,
\begin{equation*}
\cP + \cK + \cC \leq C \min \{N, t^4\},
\end{equation*}
as desired.
\end{proof}

\begin{remark}\label{r:k=0}
Our final remark concerns the case $\k = 0$. This represents the system with "frozen" characteristic parameters $\th$. In opinion dynamics such system can be interpreted as a non-cooperative game where "players" come with their fixed convictions $\th$'s but may change their opinions $v$'s to achieve a consensus. In the discrete case this situation was examined in detail in \cite{LRS-friction} where the consensus was identified as a Nash equilibrium. The equilibrium is unique, stable, and is also a global  attractor for the system. While the kinetic version of such result would be highly desirable to achieve -- this could be interpreted as a dynamics of infinitely many players -- we leave this question to a future research.  At this point we note that in the case $\k=0$ no alignment dynamics is possible, however the maximum principle obtained in \lem{v:bdd}  and \lem{l:wmp} still holds. The Grassmannian reduction still works also to show that  the support of any sectorial solution $f_t$ narrows down to a kinetic ray $ \R_+ v_\infty$ for some $v_\infty\in \S^{n-1}$. It is therefore an essentially unidirectional flow. 

Since no global flocking information is available in this case applying our analysis gives the same exponential rate as in Natalini and Paul's estimate \eqref{e:W2det}.

\end{remark}


\end{document}